\def\indeg{{\rm indeg}}
\def\a{{\mathfrak a}}
\def\P{{\mathbb P}}
\def\C{{\mathfrak C}}
\def\red{{\rm red}}
\def\CO{{\mathcal O}}
\def\antiddot{\mathinner{\mkern1mu\raise1pt\vbox{\kern7pt\hbox{.}}\mkern2mu
        \raise4pt\hbox{.}\mkern2mu\raise7pt\hbox{.}\mkern1mu}}
\newcommand{\ZZ}{{\mathbb Z}}
\newcommand{\ann}{{\rm{ann}}}
\newcommand{\Ext}{{\rm{Ext}}}
\newcommand{\s}{\mathcal}
\newcommand{\cO}{{\s O}}
\newcommand{\cI}{{\s I}}
\newcommand{\punkt}{\hspace{-.3ex}\raise.15ex\hbox to1ex{\Huge.}}
\def \fix#1 {{\hfill\break \bf (( #1 ))\hfill\break}}
\DeclareMathOperator{\reg}{reg}
\DeclareMathOperator{\Hom}{Hom}
\DeclareMathOperator{\Tor}{Tor}
\DeclareMathOperator{\codim}{codim}
\newtheorem{theorem}{Theorem}[section]
\newtheorem{lemma}[theorem]{Lemma}
\newtheorem{proposition}[theorem]{Proposition}
\newtheorem{corollary}[theorem]{Corollary}
\theoremstyle{definition}
\newtheorem{example}[theorem]{Example}
\title{The Regularity of the Conductor\
\footnote{AMS Subject Classifications:13B22, 13D02,  14H50, 14E05}
\footnote{The authors are grateful for the support of the National Science Foundation during the preparation of this work.}\\
Dedicated to Joe Harris, who has taught us so much, on the occasion of his 60th Birthday}
\author{David Eisenbud and Bernd Ulrich}
\date{}
\begin{document}

\maketitle

\begin{abstract}We bound the Castelnuovo-Mumford regularity and syzygies of the ideal of the singular set of a plane curve, and more generally of the conductor scheme of certain projectively Gorenstein varieties.
\end{abstract}

\section{Introduction}

This note was inspired by a letter from Remke Kloosterman asking whether
the following result (now essentially Proposition 3.6 in Kloosterman~\cite{Kloosterman}) was known:

\begin{theorem}\label{Kloosterman}
Suppose that $C\subset \P^{2}_{\bf C}$ is a reduced plane curve of degree $d$ over the complex numbers, and
suppose that the only singularities of $C$ are ordinary nodes and cusps, i.e., have local
analytic equations $xy=0$ or $y^2-x^3=0$. If
$\, \Gamma\subset \P^{2}_{\bf C}$ denotes the set of points at which $C$ has nodes, then
$\reg I_{\Gamma}\leq d-1$, and the minimal number of homogeneous generating syzygies
of  degree $d$ is precisely the number of irreducible components of $C$ minus 1.
\end{theorem}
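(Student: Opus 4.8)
The plan is to reformulate both assertions as statements about the cohomology of twists of $\mathcal I_\Gamma$ and then to compute that cohomology by means of the conductor of a suitable partial normalization of $C$. Since $\Gamma$ is a reduced set of points in $\P^2$, one has $\reg I_\Gamma\le d-1$ if and only if the nodes impose independent conditions on curves of degree $d-2$, i.e. $H^1(\mathcal I_\Gamma(d-2))=0$. For the second assertion, once the regularity bound is known it forces the minimal free resolution of $S/I_\Gamma$ (with $S={\bf C}[x_0,x_1,x_2]$) to have the shape $0\to F_2\to F_1\to S$ with $F_2$ generated in degrees $\le d$; the number of degree-$d$ generating syzygies is then the extremal graded Betti number $\beta_{2,d}(S/I_\Gamma)$, and graded local duality identifies it with $\dim_{\bf C}H^1(\mathcal I_\Gamma(d-3))$. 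So the task reduces to showing $H^1(\mathcal I_\Gamma(d-2))=0$ and $h^1(\mathcal I_\Gamma(d-3))=r-1$, where $r$ is the number of irreducible components of $C$.

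To access these groups I would introduce the partial normalization $\nu\colon C'\to C$ that separates the two branches at each node but leaves the (unibranch) cusps untouched. Its conductor $\mathfrak c\subset\CO_C$ satisfies $\CO_C/\mathfrak c=\CO_\Gamma$, so $\mathcal I_\Gamma\cdot\CO_C=\mathfrak c$ and there is an exact sequence
\[
0\to \CO_{\P^2}(-d)\xrightarrow{\ f\ }\mathcal I_\Gamma\to \mathfrak c\to 0,
\]
where $f$ is the defining equation of $C$. Because $C$ is a plane curve it is arithmetically Gorenstein with $\omega_C\cong\CO_C(d-3)$, and $C'$ has only cusp singularities, hence is again Gorenstein with invertible dualizing sheaf. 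Grothendieck duality for the finite map $\nu$ then yields
\[
\nu_*\omega_{C'}=\mathcal{H}om_{\CO_C}(\nu_*\CO_{C'},\omega_C)=\mathfrak c\otimes_{\CO_C}\omega_C\cong\mathfrak c(d-3),
\]
the classical identification of the conductor with the adjoint ideal.

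With this in hand the two computations run in parallel. Twisting the displayed sequence by $d-2$ and using $H^1(\CO_{\P^2}(-2))=0$ reduces $H^1(\mathcal I_\Gamma(d-2))$ to $H^1(\mathfrak c(d-2))=H^1(C',\omega_{C'}\otimes\nu^*\CO(1))$; by Serre duality on the Gorenstein curve $C'$ this is dual to $H^0(C',\nu^*\CO(-1))$, which vanishes because $\nu^*\CO(1)$ has positive degree on every component. Hence $\reg I_\Gamma\le d-1$. Twisting instead by $d-3$ and using $H^2(\CO_{\P^2}(-3))\cong{\bf C}$ together with $H^2(\mathcal I_\Gamma(d-3))=H^2(\CO_{\P^2}(d-3))=0$, the connecting map is surjective and gives
\[
h^1(\mathcal I_\Gamma(d-3))=\dim_{\bf C}\ker\bigl(H^1(C',\omega_{C'})\to H^2(\CO_{\P^2}(-3))\bigr).
\]
Again by duality $H^1(C',\omega_{C'})\cong H^0(C',\CO_{C'})^\vee$, whose dimension is the number of connected components of $C'$.

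The geometric heart of the argument, and the step I expect to need the most care, is the identification of that number of components with $r$. The point is that the full normalization $\widetilde C\to C'$ only resolves the cusps, each of which is unibranch, so $\widetilde C\to C'$ is a bijection and $C'$ has exactly as many connected components as $\widetilde C$, namely $r$; equivalently, distinct irreducible components of $C$ meet only at nodes, which $\nu$ pulls apart, while a cusp never glues two components. Since the connecting map above is a nonzero map to ${\bf C}$, its kernel has dimension $r-1$, giving precisely $r-1$ generating syzygies in degree $d$. I expect the subtler technical points to be the precise form of the conductor duality (checking that $C'$ is Gorenstein, so that $\omega_{C'}$ is a line bundle and Serre duality applies) and the homological identification $\beta_{2,d}(S/I_\Gamma)=h^1(\mathcal I_\Gamma(d-3))$; the cusps, by contrast, are entirely controlled by the observation that they are unibranch and therefore invisible both to the regularity obstruction and to the component count.
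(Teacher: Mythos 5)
Your proof is correct, but it takes a genuinely different route from the paper's. The paper proves a general algebraic statement (Theorem~\ref{main}): for any graded Cohen--Macaulay ring $B$ with $A\varsubsetneq B\subset\overline A$ and $A$ Gorenstein, the isomorphism $A/\C_{B/A}\cong\Ext^1_A(B/A,A)$ lets one dualize the length-$(c+1)$ resolution of $B/A$, yielding perfection of $\C'_{B/A}$, the exact formula $\reg \C'_{B/A}=\reg X-1-\indeg(B/A)$, and the duality of the top $\Tor$ with $(B/A)_{\indeg(B/A)}$; Theorem~\ref{Kloosterman} is then deduced by sandwiching $I_\Gamma$ between $\C'_X$ and $\C'_{B'/A}$, where $B'$ is the product of the coordinate rings of the components, using monotonicity of regularity and top-degree syzygy counts (Corollaries~\ref{plane curves}, \ref{plane curves 2}, \ref{improved Kloosterman}). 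You instead work sheaf-theoretically with the node-separating partial normalization $\nu\colon C'\to C$, chosen precisely so that its conductor equals $\cI_\Gamma\CO_C$ (radical at nodes, trivial at the unibranch cusps), and compute $H^1(\cI_\Gamma(d-2))$ and $h^1(\cI_\Gamma(d-3))$ via $\nu_*\omega_{C'}\cong \mathfrak c(d-3)$ and Serre duality on $C'$. This is the geometric incarnation of the same duality---in effect you apply Theorem~\ref{main} with $B=\oplus_e H^0(\CO_{C'}(e))$, for which $\C'_{B/A}=I_\Gamma$ and $(B/A)_0\cong H^0(\CO_{C'})/{\bf C}$ has dimension $r-1$---and it extends to reducible curves the classical adjoint-series argument of Section~\ref{adjoint section}. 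Your route buys a direct treatment of $I_\Gamma$ with no sandwich, since your $C'$ matches $\Gamma$ exactly; the paper's route needs no Grothendieck duality for $\nu$, works in any characteristic and codimension, gives perfection of the conductor and the full graded dual of the syzygy module, and covers arbitrary sets of singular points. The two points you flagged as delicate are exactly right and your treatment of them is sound: the identification $\beta_{2,d}(S/I_\Gamma)=h^1(\cI_\Gamma(d-3))$ is the last statement of Proposition~\ref{Cohen-Macaulay} (valid since $I_\Gamma$ is perfect of codimension $2$, with both sides vanishing when $\reg S/I_\Gamma<d-2$, consistently with the case $r=1$), and the surjectivity of your connecting map onto $H^2(\CO_{\P^2}(-3))$ follows from $H^2(\cI_\Gamma(d-3))=H^2(\CO_{\P^2}(d-3))=0$.
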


In case $C$ is irreducible this result simply says that the regularity of the set of nodal points
of $C$ is bounded by $d-2$. Since the regularity of the set of nodal points is bounded by
the regularity of the set of all the singular points, this is a consequence of the classical ``completeness of the adjoint series'' (see Section~\ref{adjoint section}).

In the general case, Kloosterman's proof is based on delicate arguments of Dimca \cite{Dimca} about the mixed Hodge theory of singular hypersurfaces. Aside from the application to arithmetic geometry that Kloosterman makes, his result seemed to us interesting and important because it sheds some light on the famous problem of understanding the restrictions on the positions of the nodes of a plane curve, about which  little is known (see Section~\ref{other results}).

It is the purpose of this note to give a simple expression for the regularity of the conductor ideal that extends Kloosterman's result in a way not limited to characteristic zero or to curves; it is a statement about any finite birational extension of a quasi-Gorenstein ring by a Cohen-Macaulay ring. The proof, given in the next section, involves only considerations of duality.

Specializing to the case of plane curves, we can use the extra strength of our result
to extend Theorem~\ref{Kloosterman} to curves with arbitrary singularities (Corollaries~\ref{plane curves} and \ref{plane curves 2}). Here is a special case of that result:

\begin{corollary}\label{improved Kloosterman}
Suppose that $C\subset \P^{2}_K$ is a reduced plane curve of degree $d$ over an algebraically
closed field $K$.
If $\, \Gamma$ denotes any set of singular points of $C$, then
$\reg I_{\Gamma}\leq d-1$. If, moreover, $\, \Gamma$ contains the points $($if any$)$ at which distinct irreducible components of $C$ meet and all these points of intersection are ordinary nodes, then the minimal number of homogeneous generating syzygies of $I_{\Gamma}$
of degree $d$ is precisely the number of irreducible components of $C$ minus 1.
\end{corollary}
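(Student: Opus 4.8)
The plan is to translate both assertions into cohomological statements about the reduced scheme $\Gamma$ and then to compare $\Gamma$ with two conductor schemes to which the general theorem of the next section applies. Write $S = K[x_0,x_1,x_2]$ and $R = S/(f)$ with $\deg f = d$, so that $R$ is Gorenstein with $\omega_R \cong R(d-3)$; let $\tilde C \to C$ be the normalization and $\bar R$ the integral closure of $R$, a finite birational Cohen--Macaulay extension. I first record two reductions. Since $\Gamma$ is $0$-dimensional, $\reg I_\Gamma \le d-1$ is equivalent to $H^1(\mathcal I_\Gamma(t)) = 0$ for all $t \ge d-2$. And graded local duality on $S/I_\Gamma$ identifies the number of minimal degree-$d$ generating syzygies of $I_\Gamma$, that is $\beta_{2,d}(S/I_\Gamma)$, with $\dim_K(\omega_{S/I_\Gamma})_{3-d}$, which in turn equals $h^1(\mathcal I_\Gamma(d-3))$. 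Thus both assertions become statements about $h^1(\mathcal I_\Gamma(d-3))$ and its vanishing for larger twists.

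The regularity bound comes from comparing $\Gamma$ with the conductor scheme $\C$ of the normalization. At every singular point $p$ the conductor ideal is $\m_p$-primary, hence contained in $\m_p$; since the reduced structure on $\Gamma$ at $p$ is cut out by $\m_p$ and $I_\Gamma$ is the unit ideal away from $\Gamma$, we get $\mathcal I_\C \subseteq \mathcal I_\Gamma$, i.e.\ $\Gamma$ is a closed subscheme of $\C$. For $0$-dimensional subschemes of $\P^2$ the surjection $H^0(\CO_\C) \twoheadrightarrow H^0(\CO_\Gamma)$ is compatible with restriction from $H^0(\CO_{\P^2}(t))$, so every twist in which $\C$ imposes independent conditions is one in which $\Gamma$ does; hence $\reg I_\Gamma \le \reg \mathcal I_\C$. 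Applied to $R \subseteq \bar R$, with $\omega_R = R(d-3)$, the general theorem gives $\reg \mathcal I_\C \le d-1$, and the first claim follows.

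For the syzygy count I would bound $h^1(\mathcal I_\Gamma(d-3))$ on both sides. The same inclusion $\Gamma \subseteq \C$ yields the upper bound $h^1(\mathcal I_\Gamma(d-3)) \le h^1(\mathcal I_\C(d-3))$, because the cokernel of $H^0(\CO_{\P^2}(d-3)) \to H^0(\CO_\Gamma)$ is a quotient of the corresponding cokernel for $\C$. The specialization of the general theorem to $R \subseteq \bar R$ computes the extremal degree-$d$ syzygies of the conductor as the connectedness defect $h^0(\CO_{\tilde C}) - h^0(\CO_C) = s-1$, where $s$ is the number of irreducible components of $C$; equivalently $h^1(\mathcal I_\C(d-3)) = s-1$. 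Hence $h^1(\mathcal I_\Gamma(d-3)) \le s-1$ for every set $\Gamma$ of singular points.

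For the matching lower bound I would use the partial normalization that separates the components, $R \hookrightarrow \prod_{i=1}^s R_i$ with $R_i = S/(f_i)$ and $f = f_1\cdots f_s$; this is again a finite birational Cohen--Macaulay extension, whose conductor is supported on the set $\Delta$ of points where distinct components meet. Here the hypothesis is decisive: since every such crossing is an ordinary node, the extension is locally the normalization of a node, whose conductor is the maximal ideal, so the conductor scheme is the reduced scheme $\Delta$ and $I_\Delta$ is radical. The general theorem applied to this extension again gives $h^1(\mathcal I_\Delta(d-3)) = s-1$, and because $\Delta \subseteq \Gamma$ the surjection $H^0(\CO_\Gamma) \twoheadrightarrow H^0(\CO_\Delta)$ forces $h^1(\mathcal I_\Gamma(d-3)) \ge h^1(\mathcal I_\Delta(d-3)) = s-1$. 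Combined with the upper bound this yields $h^1(\mathcal I_\Gamma(d-3)) = s-1$, the asserted count. The main obstacle I anticipate is not any single inequality but the duality bookkeeping: one must check that the extremal Betti number $\beta_{2,d}(S/I_\Gamma)$ is exactly $h^1(\mathcal I_\Gamma(d-3))$, so that the degree-$d$ syzygies and no others are counted, and that the general theorem's expression for the conductor's top syzygies is genuinely the connectedness defect $s-1$ for both extensions; verifying that $\prod_i R_i$ and $\bar R$ are Cohen--Macaulay, so that the theorem applies, is a necessary preliminary.
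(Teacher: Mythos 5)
Your proposal is correct and takes essentially the same route as the paper: sandwiching $\Gamma$ between the reduced scheme $\Delta$ of intersection points (the conductor scheme of the component-separating partial normalization $B'=\prod_i A_i$, reduced because the crossings are ordinary nodes) and the conductor scheme of the full normalization, applying Theorem~\ref{main} to both extensions, and transferring the regularity and syzygy counts by monotonicity of $h^1$ of twisted ideal sheaves is precisely the chain $\C'_{X}\subset I_{\Gamma}\subset I\subset \C'_{B'/A}$ together with the monotonicity from Proposition~\ref{Cohen-Macaulay} that underlies the paper's Corollaries~\ref{plane curves} and~\ref{plane curves 2}. The duality bookkeeping you flag as the main obstacle, identifying $\beta_{2,d}(S/I_{\Gamma})$ with $h^{1}(\cI_{\Gamma}(d-3))$, is exactly the content of the last statement of Proposition~\ref{Cohen-Macaulay}, so your argument is complete.
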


Theorem \ref{Kloosterman} follows because the set $\Gamma$ in the Theorem
contains the points at which distinct components of the curve meet.

Another ideal related to the singular set and the conductor is the Jacobian ideal of the plane curve. This
ideal is an ``almost complete intersection'' in characteristic zero. If the curve has only ordinary nodes as singularities, then the conductor ideal is the saturation of the Jacobian ideal. We prove a general result  (Proposition~\ref{reg from fitting}) about the syzygies of an almost complete intersection with perfect saturation that implies, in the situation of a degree $d$ plane curve $f=0$ with only nodes, that the syzygies of the partial derivatives of $f$ have degree at least $2d-3$, and a little more (Corollary~\ref{derivatives}); this generalizes a result of Dimca and Sticlaru~\cite{Dimca-Sticlaru} that, again, was originally proven by Hodge theory.

Besides the conductor one can measure the difference between a standard graded algebra $A$ and a (partial) normalization $B$ by the size of the $A$-module $B/A$. We also give bounds on the regularity
of this module (and on the regularity of $B$ as an $A$-module) in the case where both $A$ and
$B$ are Cohen-Macaulay and $A$ is reduced (Proposition~\ref{regB}).

The fact that the number of components of a plane curve appears in a formula for the regularity of the conductor suggests that there might be a simple relation between the conductor of a reducible hypersurface and the conductors of its components; such a relation is given in Proposition~\ref{reduced}.

\section{Notation and Conventions}
Throughout we let
$S = K[x_{0},\ldots, x_{r}]$ be a polynomial ring over a field $K$. If $M$ is a finitely generated
graded $S$-module, we write $\reg M$ for the (Castelnuovo-Mumford) regularity of $M$ and $\indeg\ M$ for the infimum of $\{i \mid M_{i}\neq 0\}$. Let $X$ be a subscheme of $\P^r_K$ with saturated homogeneous ideal
$I_{X}$ and homogeneous coordinate ring $A=S/I_X$. We define $\reg X :=\reg I_{X}= \reg A +1$. If $A \subset B$ is a ring extension, we denote by
\begin{align*}
\C_{B/A} &:= \ann_{A} ({B}/{A}) \subset A  \text{ and }\\
\C'_{B/A} &:= \ann_{S} ({B}/{A}) \subset S \, ,
\end{align*}
the \emph{conductor} of $B$ into $A$, regarded as an ideal
of $A$ or of $S$, respectively. Note that $A/\C_{B/A} = S/\C'_{B/A}$.

When $X$ is reduced and $B=\overline A$ is the normalization of $A$, we
write $\C_{X}$ or $\C'_{X}$ instead of $\C_{B/A}$ or $\C'_{B/A}$. These
are homogeneous ideals of codimension at least $1$ and $1+ \codim X$, respectively.

\section{Regularity}

Recall that a homogeneous ideal $I\subset S$ is called \emph{perfect} if $S/I \neq 0$ is a Cohen-Macaulay ring or, equivalently, if the projective dimension of the $S$-module $S/I$ is $\codim I$.

\begin{theorem}\label{main} Let $X\subset \P^{r}_{K}$ be a reduced scheme
of codimension $c$ with
homogeneous coordinate ring $A$, and assume that $A$ is Gorenstein. Let
$\overline A$ be the normalization of $A$.
Write $S = K[x_{0},\dots, x_{r}]$ for the homogeneous
coordinate ring of $\P^{r}_K$.
Suppose $B$ is a graded Cohen-Macaulay ring
with $A\varsubsetneq B\subset \overline A$.

The conductor ideal $\C'_{B/A} \subset S$ is perfect of codimension
$c+1$ and
$$
\reg \C'_{B/A} = \reg X  - 1 - \indeg (B/A) \, ;
$$
in particular, if $X$ is geometrically reduced and irreducible, then $\reg \C'_{B/A} < \reg X -1 \, .$

Moreover,
$
\Tor^{S}_{c}(\C'_{B/A}, K)_{\reg \C'_{B/A}+c}
$
is $K$-dual to
$(B/A)_{\indeg(B/A)};
$
in particular, the $c$-th syzygy module
of $\C'_{B/A}$ has precisely
$\dim_{K} (B/A)_{\indeg(B/A)}$ homogeneous minimal generators of highest degree.
\end{theorem}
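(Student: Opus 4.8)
The plan is to reduce everything to a single duality identification, $\omega_N\cong(B/A)(a)$, and then read off the regularity and the syzygies from the self-dual resolution of a perfect module. Throughout set $d=\dim A=r-c+1$, write $\m$ for the homogeneous maximal ideal, let $(-)^{\vee}$ denote the graded $K$-dual, and let $a$ be the $a$-invariant of the Gorenstein ring $A$, so that $\omega_A\cong A(a)$ and $\reg X=\reg A+1=a+d+1$. Abbreviate $N:=A/\C_{B/A}=S/\C'_{B/A}$.

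First I would record the standard fact that, since $A\subset B$ is finite and birational, $\C_{B/A}=\operatorname{Hom}_A(B,A)$ as an ideal of $A$. Because $A$ is Gorenstein and $A,B$ are both maximal Cohen--Macaulay of dimension $d$, we have $\operatorname{Ext}^i_A(B,A)=\operatorname{Ext}^i_A(A,A)=0$ for $i>0$; applying $\operatorname{Hom}_A(-,A)$ to $0\to A\to B\to B/A\to 0$ and using that $B/A$ is torsion yields
$$0\to \C_{B/A}\to A\to \operatorname{Ext}^1_A(B/A,A)\to 0,$$
so $N\cong\operatorname{Ext}^1_A(B/A,A)$. Moreover $\C_{B/A}=\operatorname{Hom}_A(B,A)\cong\operatorname{Hom}_A(B,\omega_A)(-a)=\omega_B(-a)$ is maximal Cohen--Macaulay, and comparing depths along $0\to\C_{B/A}\to A\to N\to 0$ (the dimension of $N$ being $d-1$ since the conductor of the reduced ring $A$ has height one) forces $N$ to be Cohen--Macaulay of dimension $d-1$. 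Hence $\C'_{B/A}$ is perfect of codimension $c+1$.

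The crux is the identification $\omega_N\cong(B/A)(a)$, which I would obtain by feeding the extension sequence through local duality. Taking local cohomology of $0\to\C_{B/A}\to A\to N\to 0$ and using $H^{d-1}_{\m}(A)=H^{d}_{\m}(N)=0$ gives $0\to H^{d-1}_{\m}(N)\to H^{d}_{\m}(\C_{B/A})\to H^{d}_{\m}(A)\to 0$; dualizing and substituting $H^{d}_{\m}(A)^{\vee}\cong A(a)$, $H^{d}_{\m}(\C_{B/A})^{\vee}\cong H^{d}_{\m}(\omega_B(-a))^{\vee}\cong B(a)$, and $H^{d-1}_{\m}(N)^{\vee}\cong\omega_N$ turns it into $0\to A(a)\to B(a)\to\omega_N\to 0$, whose cokernel is $(B/A)(a)$. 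Keeping the graded shifts straight under Matlis and local duality, and using the Gorenstein hypothesis on $A$ essentially, is the step I expect to be most delicate. The regularity formula then follows: since $N$ is Cohen--Macaulay of dimension $d-1$, $\reg\C'_{B/A}=\reg N+1=\bigl(\max\{j:H^{d-1}_{\m}(N)_j\neq 0\}+(d-1)\bigr)+1$, and local duality identifies that top degree with $-\indeg\omega_N=a-\indeg(B/A)$, giving $\reg\C'_{B/A}=a+d-\indeg(B/A)=\reg X-1-\indeg(B/A)$. When $X$ is geometrically reduced and irreducible one has $A_0=B_0=K$, so $\indeg(B/A)\geq 1$ and the strict inequality follows.

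Finally, for the syzygies I would exploit that a perfect $S$-module $N$ of codimension $g=c+1$ has a minimal free resolution $F_\bullet$ that is self-dual up to the twist $\omega_S=S(-r-1)$: the dual complex $\operatorname{Hom}_S(F_\bullet,S)(-r-1)$ is a minimal free resolution of $\omega_N$. Consequently the top Tor of $N$ is $K$-dual to the generators of $\omega_N$ under the degree reversal $j\leftrightarrow r+1-j$, namely $\Tor^{S}_{g}(N,K)_j\cong\bigl(\Tor^{S}_{0}(\omega_N,K)_{\,r+1-j}\bigr)^{\vee}$. Since $\Tor^{S}_{c}(\C'_{B/A},K)=\Tor^{S}_{c+1}(N,K)=\Tor^{S}_{g}(N,K)$ and $\omega_N\cong(B/A)(a)$, whose minimal generators in its lowest degree $\indeg\omega_N=\indeg(B/A)-a$ are precisely $(B/A)_{\indeg(B/A)}$, a short check that $r+1-\indeg\omega_N=\reg\C'_{B/A}+c$ matches the degrees and establishes the asserted $K$-duality. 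Taking dimensions, the number of top-degree minimal generators of the $c$-th syzygy module of $\C'_{B/A}$ equals $\dim_K(B/A)_{\indeg(B/A)}$.
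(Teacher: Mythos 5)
Your proof is correct, and while its engine is the same duality that drives the paper's proof, you run it in the opposite direction, which is worth comparing. The paper dualizes the extension sequence $0\to A\to B\to B/A\to 0$ into $A$, transfers to $S$ by the change-of-rings isomorphism, and lands on $S/\C'_{B/A}\cong \Ext^{c+1}_S(B/A,S)(-a-r-1)$, i.e.\ it identifies $S/\C'_{B/A}$ (up to twist) with $\omega_{B/A}$; dualizing the minimal free resolution of $B/A$ into $S(-a-r-1)$ then delivers perfection, the regularity formula, and the Tor-duality in one stroke. You instead prove the bidual statement $\omega_{S/\C'_{B/A}}\cong (B/A)(a)$, by dualizing the conductor sequence $0\to \C_{B/A}\to A\to A/\C_{B/A}\to 0$ via local cohomology and using $\C_{B/A}\cong\omega_B(-a)$, with perfection established separately by a depth count and the Tor statement recovered from the abstract self-duality of the resolution of a perfect module. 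The two identifications are equivalent by Cohen--Macaulay biduality, and your version has the virtue of making the canonical module of the conductor scheme, and hence the regularity via local duality, completely explicit without ever writing down a resolution. The one place where the paper's arrangement is genuinely more economical is exactly the step you flag as delicate: in the paper, $A/\C_{B/A}\cong\Ext^1_A(B/A,A)$ falls out of the long exact sequence with no map left to identify, whereas your route requires knowing that the dualized inclusion $A(a)=\Hom_A(A,\omega_A)\to\Hom_A(\C_{B/A},\omega_A)\cong B(a)$ is the natural inclusion $A\subset B$ (twisted), so that the cokernel is $(B/A)(a)$ rather than an abstract extension; this can be discharged by noting that local duality is functorial, so your local cohomology sequence dualizes to the $\Hom_A(-,\omega_A)$-dual of the conductor sequence, and that $\Hom_A(\C_{B/A},A)=A:_Q\C_{B/A}=B$ inside the total quotient ring $Q$, compatibly with the inclusions (using that $\C_{B/A}$ contains a nonzerodivisor and $\Hom_A(\Hom_A(B,A),A)\cong B$ for $B$ maximal Cohen--Macaulay over the Gorenstein ring $A$). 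One cosmetic slip: the chain ``$\Ext^i_A(B,A)=\Ext^i_A(A,A)=0$ for $i>0$'' is not an equality of functors; the correct justification is that $B$ is maximal Cohen--Macaulay over the Gorenstein ring $A$, so $\Ext^i_A(B,\omega_A)=0$ for $i>0$ and $\omega_A\cong A(a)$. All remaining degree bookkeeping in your argument ($r+1-\indeg\omega_N=\reg\C'_{B/A}+c$, and $\reg X-1-\indeg(B/A)=a+d-\indeg(B/A)$ with $d=r-c+1$) checks out.
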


Under mild hypotheses we can use Theorem~\ref{main} to extract information about the codimension 1 components of
the singular locus of $X$. The module $S/\C'_{X}$ is
supported precisely on these components. If the codimension 1 components are generically only nodes and cusps (that is, after we localize at such a component, complete, and extend the residue field to its algebraic closure, they become nodes and cusps), then
the conductor ideal is generically radical; and since the conductor ideal is perfect, it is equal to the reduced ideal
of the union of the codimension 1 components of the singular locus; thus this reduced scheme has
regularity $\leq \reg X -1$.

We remind the reader that regularity has a simple interpretation for perfect ideals:

\begin{proposition}\label{Cohen-Macaulay}
Suppose that $I\subset S$ is a homogeneous perfect ideal of codimension $c$ with
$1\leq c\leq r$, and let $\cI$ be the
corresponding ideal sheaf on $\P^{r}_K$. The following statements are equivalent$:$
\begin{itemize}
\item[$(1)$] $\reg S/I \leq m$;
\item[$(2)$] The $c$-th syzygies of $S/I$ are generated in degrees $\leq m+c$;
\item[$(3)$] $H^{r-c+1}(\cI(m+c -r )) = 0$;
\item[$(4)$] The value of the Hilbert function $\dim_{K}(S/I)_{e}$ is equal to the
corresponding value of the Hilbert polynomial of $S/I$ for all $e\geq m+c-r$.
\end{itemize}

Moreover, if $m=\reg S/I$, the number of highest syzygies of $S/I$ $($or $I$$)$ of highest degree can be computed in terms of cohomology via a natural isomorphism
$$
{\rm Tor}_{c}^S(S/I, K)_{m+c} \cong H^{r-c+1}(\cI(m+c-r-1)).
$$
\end{proposition}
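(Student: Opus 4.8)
The plan is to translate every one of the four conditions into a statement about the single nonvanishing local cohomology module $H^{r+1-c}_{\m}(S/I)$ and then invoke graded local duality. Write $M = S/I$; since $I$ is perfect of codimension $c$, the module $M$ is Cohen--Macaulay of dimension $d := r+1-c$, so $H^i_\m(M) = 0$ for $i \neq d$ and the minimal free resolution of $M$ has length $c$. Because regularity is read off from local cohomology and only the top module survives, $\reg M = d + \max\{j \mid H^d_\m(M)_j \neq 0\}$, so (1) is equivalent to $H^d_\m(M)_j = 0$ for all $j \ge m+c-r$. First I would record the comparison of local and sheaf cohomology together with the cohomology sequence of $0 \to \cI \to \CO_{\P^r} \to \CO_X \to 0$ (where $X \subset \P^r_K$ is the subscheme defined by $I$), which yields a graded isomorphism $H^d_\m(M)_j \cong H^{r-c+1}(\cI(j))$ for every $j$; this is where the boundary case $c = r$ (the four-term sequence, using $H^0_\m(M) = 0$) needs separate bookkeeping but presents no real difficulty.

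Next I would settle the equivalences among (1), (3), (4). The content of (3) is that a \emph{single} twist of the top cohomology vanishes, so the crux is upward propagation: choosing a linear form $\ell$ that is a nonzerodivisor on $M$ (after harmlessly enlarging $K$ to an infinite field, which alters neither regularity, resolution, nor Hilbert function), the sequence $0 \to M(-1) \xrightarrow{\ell} M \to M/\ell M \to 0$ shows that multiplication by $\ell$ is surjective on $H^d_\m(M)$ in each degree; hence $H^d_\m(M)_{m+c-r} = 0$ forces $H^d_\m(M)_j = 0$ for all $j \ge m+c-r$, giving (3) $\Leftrightarrow$ (1). For (4) I would use the Serre formula $\dim_K M_e - P_M(e) = \sum_{i \ge 0} (-1)^i \dim_K H^i_\m(M)_e$, which for the Cohen--Macaulay module $M$ collapses to the single term $(-1)^d \dim_K H^d_\m(M)_e$; thus the Hilbert function agrees with the Hilbert polynomial $P_M$ in degree $e$ exactly when $H^d_\m(M)_e = 0$, and the same propagation argument yields (4) $\Leftrightarrow$ (1).

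For (2) I would appeal to the self-duality of the resolution. Dualizing the minimal resolution $F_\bullet$ of $M$ into $S$ and twisting by $-r-1$ produces the minimal resolution of the canonical module $\omega_M = \operatorname{Ext}^c_S(M, S(-r-1))$; reading off degrees shows $\indeg \omega_M = r+1 - t_c(M)$, where $t_c(M)$ denotes the largest degree in which $\Tor_c^S(M,K)$ is nonzero. Combined with graded local duality $H^d_\m(M)_j^\vee \cong (\omega_M)_{-j}$, this gives the clean identity $\reg M = t_c(M) - c$, which is precisely the equivalence of (1) and (2); the nontrivial direction is that for a perfect ideal the regularity is attained at the last step of the resolution.

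Finally, the \emph{moreover} statement drops out of the same duality chain taken at the extremal degree. When $m = \reg M$ we have $t_c(M) = m+c$, the top local cohomology ends in degree $m+c-r-1$, and $\omega_M$ begins in degree $r+1-m-c$. Composing the isomorphism $\Tor_c^S(M,K)_{m+c}^\vee \cong \Tor_0^S(\omega_M,K)_{r+1-m-c} = (\omega_M)_{r+1-m-c}$ coming from the dualized resolution with local duality $(\omega_M)_{r+1-m-c} \cong H^d_\m(M)_{m+c-r-1}^\vee \cong H^{r-c+1}(\cI(m+c-r-1))^\vee$, and using that the double $K$-dual is canonical on finite-dimensional spaces, gives the asserted natural isomorphism. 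The main obstacle is not any single estimate but the disciplined tracking of grading shifts through the two dualities; once the identity $\reg M = t_c(M) - c$ and the comparison $H^d_\m(M)_j \cong H^{r-c+1}(\cI(j))$ are in hand, the remainder is bookkeeping.
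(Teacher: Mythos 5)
Your proof is correct, and for the ``moreover'' isomorphism it runs on the same engine as the paper's: since $I$ is perfect, the $S$-dual of the minimal free resolution, twisted by $-r-1$, is the minimal free resolution of $\omega_{S/I}$; minimality identifies $\Tor_c^S(S/I,K)_{m+c}$ with the $K$-dual of $(\omega_{S/I})_{\indeg\,\omega_{S/I}}$ (your observation that $(K\otimes\omega_{S/I})_{\indeg} = (\omega_{S/I})_{\indeg}$ is exactly the paper's step $(K\otimes_S\Ext^c_S(S/I,S))_{-m-c}\cong \Ext^c_S(S/I,S)_{-m-c}$), and graded local duality finishes. Where you genuinely add content is in the equivalences $(1)$--$(4)$: the paper cites these as standard, while your reduction of all four conditions to the vanishing of the single surviving module $H^{r+1-c}_{\m}(S/I)$, with upward propagation via a linear nonzerodivisor and the Serre formula for $(4)$, is a clean self-contained treatment. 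Where you diverge in the endgame: the paper converts $\Ext^c_S(S/I,S)$ to $\Ext^{c-1}_S(I,S)$ and applies local duality to $I$ itself, which lands on $H^{r-c+2}_{\m}(I)_e \cong H^{r-c+1}(\cI(e))$ with no case analysis at all (valid for every $e$, since $r-c+1\geq 1$); you instead pass through $H^{r+1-c}_{\m}(S/I)$ and the sequence $0\to\cI\to\CO_{\P^r}\to\CO_X\to 0$, which buys a concrete geometric picture at the cost of boundary bookkeeping.

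One inaccuracy to repair in that bookkeeping: the comparison $H^{r+1-c}_{\m}(S/I)_j \cong H^{r-c+1}(\cI(j))$ does \emph{not} hold for every $j$. Besides the case $c=r$ that you flag (which is indeed harmless, via the four-term sequence and $H^0_{\m}(S/I)=0$), it fails at the other boundary $c=1$: there one only gets an exact sequence $0\to H^{r}_{\m}(S/I)_j \to H^{r}(\cI(j)) \to H^{r}(\CO_{\P^r}(j))\to 0$, and the last term is nonzero for $j\leq -r-1$. This does no harm at the twists you actually use, since $\reg S/I \geq 0$ and $c\geq 1$ force $m+c-r-1\geq -r$ whenever the conditions are in play, and in the direction $(3)\Rightarrow(1)$ the injectivity of $H^{r}_{\m}(S/I)_j\to H^{r}(\cI(j))$ suffices anyway; but the blanket claim ``for every $j$'' should be restricted accordingly (or avoided altogether by arguing through $I$, as the paper does).
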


\begin{proof} The equivalence of the first four statements is standard (see for example
Eisenbud [2005], Section 4). For the last statement, note that under the given
hypothesis the minimal homogeneous free
$S$-resolution of $S/I$ has length $c$, and the highest degree of a $c$-th homogeneous
generating syzygy of $S/I$ is $m+c$. Writing $-^{*}$ for $K$-duals, we obtain that
\begin{align*}
{\rm Tor}_{c}^S(S/I, K)_{m+c}
&\cong
((K\otimes_S {\rm Ext}^{c}_S(S/I, S))_{-m-c})^{*}\\
&\cong
({\rm Ext}^{c}_S(S/I, S)_{-m-c})^{*}\\
&\cong
({\rm Ext}^{c-1}_S(I, S)_{-m-c})^{*} \, .
\end{align*}
Since $S(-r-1)$ is the canonical module of $S$, local duality shows that the last of these is naturally isomorphic to
$H^{r-c+1}(\cI(m+c-r-1))$.
\end{proof}

If $X$ is a reduced curve in $\P^2_K$, then $X$ is arithmetically Gorenstein, and the
normalization of $X$ is Cohen-Macaulay. Moreover, there are many
Cohen-Macaulay rings between the homogeneous coordinate ring
of $X$ and its normalization. We can exploit this situation to improve
Theorem~\ref{Kloosterman}:

\begin{corollary}\label{plane curves}
If $X\subset \P^{2}_K$ is a reduced singular plane curve of degree $d$ over a
perfect field $K$, then
\[
\reg \C'_{X} =d-1-\inf\{m \mid h^{0}(\CO_{\overline X}(m))>{m+2\choose 2}\} \, ,
\]
and the minimal number of homogeneous generating
syzygies of $\C'_{X}$ of degree $d$ is one less than the number of components of $X$
over the algebraic closure of $K$.
Thus $\reg \C'_{X} < d-1$ if and only if $X$ is geometrically irreducible, and
in particular
$$
\reg (({\rm{Sing}} \, X)_{\red})\leq d-1 \,  ,
$$
with strict inequality when $X$ is geometrically irreducible.
\end{corollary}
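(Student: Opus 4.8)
The plan is to apply Theorem~\ref{main} with $A=S/(f)$, where $f$ is the degree-$d$ form cutting out $X$, with $c=1$, and with $B=\overline A$ the full normalization. First I would verify the hypotheses: $A$ is a hypersurface ring, hence a complete intersection and therefore Gorenstein; over the perfect field $K$ the normalization $\overline A$ is a finite $A$-module, and being normal of Krull dimension $2$ it is automatically Cohen--Macaulay by Serre's $S_2$ criterion; finally $A\subsetneq\overline A$ precisely because $X$ is singular. Since $\reg S/(f)=d-1$ we have $\reg X=d$, so Theorem~\ref{main} gives that $\C'_X$ is perfect of codimension $2$ with
\[
\reg\C'_X=d-1-\indeg(\overline A/A),
\]
and that the minimal first syzygies of $\C'_X$, of top degree $\reg\C'_X+1$, number $\dim_K(\overline A/A)_{\indeg(\overline A/A)}$.

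Next I would translate $\indeg(\overline A/A)$ into the stated infimum. Because $A$ is Cohen--Macaulay of dimension $2$ one has $A_m=\binom{m+2}{2}$ for $m\le d-1$, and because $\overline A$ is Cohen--Macaulay of dimension $2$ its graded pieces compute global sections, $\overline A_m=h^0(\CO_{\overline X}(m))$, where $\overline X\to X$ is the normalization and $\CO_{\overline X}(1)$ is the pullback of $\CO_X(1)$. Hence $(\overline A/A)_m\neq 0$ is equivalent to $h^0(\CO_{\overline X}(m))>A_m$. The point needing care is that this is evaluated at small degrees: from $\reg S/\C'_X\ge 0$ we get $\reg\C'_X\ge 1$ and therefore $\indeg(\overline A/A)\le d-2$, so at and below the infimum we are in the range $m\le d-1$ where $A_m=\binom{m+2}{2}$. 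This yields $\indeg(\overline A/A)=\inf\{m\mid h^0(\CO_{\overline X}(m))>\binom{m+2}{2}\}$, and hence the first displayed formula.

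For the count I would exploit that $K$ is perfect, so $X$ is geometrically reduced and the fields of constants of its components are separable over $K$; consequently $\dim_K\overline A_0=\dim_K H^0(\CO_{\overline X})$ equals the number $s$ of irreducible components of $X$ over $\overline K$, so $\dim_K(\overline A/A)_0=s-1$ and $\indeg(\overline A/A)=0$ exactly when $s\ge 2$. If $X$ is geometrically reducible this gives $\reg\C'_X=d-1$, top syzygy degree $d$, and $\dim_K(\overline A/A)_0=s-1$ minimal syzygies in that degree; if $X$ is geometrically irreducible it gives $\indeg(\overline A/A)\ge 1$, so $\reg\C'_X<d-1$ and the top syzygy degree is strictly less than $d$, whence there are $0=s-1$ syzygies of degree $d$. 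This simultaneously settles the syzygy count and the claim that $\reg\C'_X<d-1$ holds if and only if $X$ is geometrically irreducible.

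Finally, for the statement on $(\mathrm{Sing}\,X)_{\red}$: the conductor is supported exactly on the finite singular set, so $\sqrt{\C'_X}=I_\Gamma$ with $\Gamma=(\mathrm{Sing}\,X)_{\red}$, and $\C'_X\subseteq I_\Gamma$. Both define arithmetically Cohen--Macaulay subschemes of codimension $2$, so by Proposition~\ref{Cohen-Macaulay} their regularities are governed by the vanishing of $H^1$ of the twisted ideal sheaves. Since the inclusion $\mathcal I_{\C'_X}\subseteq\mathcal I_\Gamma$ has cokernel supported on points, the long exact cohomology sequence exhibits $H^1(\mathcal I_\Gamma(e))$ as a quotient of $H^1(\mathcal I_{\C'_X}(e))$, so $\reg I_\Gamma\le\reg\C'_X\le d-1$, with strict inequality when $X$ is geometrically irreducible. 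I expect this last reduction---controlling regularity while passing from the possibly non-reduced conductor scheme to its radical---to be the main step beyond the formal input of Theorem~\ref{main}; the remainder is bookkeeping, with the perfectness of $K$ being exactly what makes the component count come out right.
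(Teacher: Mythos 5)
Your proposal is correct and follows essentially the same route as the paper: apply Theorem~\ref{main} with $B=\overline A$, use $\dim_K A_m=\binom{m+2}{2}$ for $m<d$ to translate $\indeg(\overline A/A)$ into the stated infimum, and identify $(\overline A/A)_0=H^0(\pi_*\CO_{\overline X}/\CO_X)$, whose $K$-dimension is the number of geometric components minus one. You merely make explicit two details the paper leaves implicit---the check that the infimum occurs in the range $m\leq d-2$ where $A_m=\binom{m+2}{2}$, and the $H^1$-surjection argument (via Proposition~\ref{Cohen-Macaulay}) for passing from the conductor scheme to its radical $(\mathrm{Sing}\,X)_{\red}$---both of which are sound.
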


In case $K$ is algebraically closed and $X$ is reducible, the result becomes attractively simple:

\begin{corollary}\label{plane curves 2}
Suppose that $K$ is algebraically closed and that $X\subset \P^2_{K}$ is a reduced but reducible curve.
Let
$I\subset S = k[x_0,x_1,x_2]$ be the saturated ideal of the subscheme of the conductor scheme
where at least two components meet.
If
$J$ is any unmixed ideal in $S$ with $\C'_{X}\subset J\subset I$, then $J$ has
regularity $d-1$ and
the minimal number of homogeneous generating syzygies of $J$ of degree $d$
is one less than the number of components of $X$.
\end{corollary}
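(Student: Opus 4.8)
The plan is to sandwich $J$ between the conductor $\C'_{X}$ and the conductor of a well-chosen intermediate ring, and to read off both regularity and top syzygies from the associated $H^{1}$'s via Proposition~\ref{Cohen-Macaulay}. Write $X=X_{1}\cup\dots\cup X_{t}$ with $X_{i}=V(f_{i})$, so $f=f_{1}\cdots f_{t}$ has degree $d$, $A=S/(f)$ is Gorenstein, $\reg X=d$, and (since $K$ is algebraically closed) $t$ is the number of geometric components. First I would record that $J$ is perfect of codimension $2$: it is caught between two codimension-$2$ ideals, namely $\C'_{X}$ (perfect of codimension $2$ by Theorem~\ref{main}) and $I$ (supported on the finitely many meeting points), so $\operatorname{codim}J=2$; being unmixed in the three-variable ring $S$, no associated prime of $S/J$ is the irrelevant ideal, whence $\operatorname{depth}(S/J)\geq 1=\dim(S/J)$ and $S/J$ is Cohen--Macaulay. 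Thus Proposition~\ref{Cohen-Macaulay} applies with $r=c=2$, and, writing $\widetilde{J}$ for the ideal sheaf of $J$, both assertions reduce to proving $\reg(S/J)=d-2$ and $\dim_{K}H^{1}(\widetilde{J}(d-3))=t-1$; once $\reg(S/J)=d-2$ is known, the generating syzygies of $J$ in degree $d$ are exactly the top ones and number $\dim_{K}H^{1}(\widetilde{J}(d-3))$.

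Next I would build the outer anchor. Put $A_{i}=S/(f_{i})$ and $B'=\prod_{i=1}^{t}A_{i}$. Each $A_{i}$ is a Cohen--Macaulay hypersurface ring of dimension $2$, so $B'$ is a graded Cohen--Macaulay ring, and since $X$ is reduced one has $A\hookrightarrow B'\subseteq\overline{A}=\prod_{i}\overline{A_{i}}$ with $A\subsetneq B'$ because $t\geq 2$. Hence Theorem~\ref{main} applies to $A\subset B'$. The crucial computation is that $B'/A$ is concentrated in nonnegative degrees with $(B'/A)_{0}=K^{t}/K$ the cokernel of the diagonal, so $\indeg(B'/A)=0$ and $\dim_{K}(B'/A)_{0}=t-1$. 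Theorem~\ref{main} then yields $\reg\C'_{B'/A}=\reg X-1-0=d-1$, i.e. $\reg(S/\C'_{B'/A})=d-2$, together with the fact that $\C'_{B'/A}$ has exactly $\dim_{K}(B'/A)_{0}=t-1$ generating syzygies of (highest) degree $d$; by Proposition~\ref{Cohen-Macaulay} this count equals $\dim_{K}H^{1}(\widetilde{\C'_{B'/A}}(d-3))=t-1$.

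Then I would compare the four ideal sheaves via the chain $\C'_{X}\subseteq J\subseteq I\subseteq\C'_{B'/A}$. The first two inclusions are given; at a point on a single component both $I$ and $\C'_{B'/A}$ are the unit ideal, while at a meeting point $p$ one has $(\C'_{X})_{p}=\ann(\overline{A}_{p}/A_{p})\subseteq\ann(B'_{p}/A_{p})=(\C'_{B'/A})_{p}$ because $B'_{p}\subseteq\overline{A}_{p}$, and $I$ carries the conductor structure at $p$ by definition. All four successive quotient sheaves are supported on points, so twisting by any $n$ and taking cohomology gives surjections $H^{1}(\widetilde{\C'_{X}}(n))\twoheadrightarrow H^{1}(\widetilde{J}(n))\twoheadrightarrow H^{1}(\widetilde{I}(n))\twoheadrightarrow H^{1}(\widetilde{\C'_{B'/A}}(n))$. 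For the upper bound: $X$ is geometrically reducible, so Corollary~\ref{plane curves} gives $\reg\C'_{X}=d-1$, i.e. $H^{1}(\widetilde{\C'_{X}}(d-2))=0$, and the surjection forces $H^{1}(\widetilde{J}(d-2))=0$, whence $\reg(S/J)\leq d-2$. For the lower bound and the exact count: Corollary~\ref{plane curves} also gives $\dim_{K}H^{1}(\widetilde{\C'_{X}}(d-3))=t-1$, which matches the value $t-1$ found for $\widetilde{\C'_{B'/A}}$; the chain of surjections squeezes $\dim_{K}H^{1}(\widetilde{J}(d-3))$ between these equal numbers, forcing it to equal $t-1\neq 0$. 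Hence $\reg(S/J)\geq d-2$, so $\reg(S/J)=d-2$, that is $\reg J=d-1$, and the number of degree-$d$ generating syzygies of $J$ is $t-1$, one less than the number of components.

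I expect the main obstacle to be the choice and bookkeeping of the auxiliary ring $B'$: checking that it is Cohen--Macaulay and strictly intermediate, and above all pinning down $(B'/A)_{0}=K^{t}/K$ so that $\indeg(B'/A)=0$ with multiplicity exactly $t-1$ — this is precisely what injects the component count into the regularity and syzygy formulas. The only other delicate point is the local inclusion $I\subseteq\C'_{B'/A}$ at meeting points that are also singular on an individual component, where the conductor schemes of $\overline{A}$ and of $B'$ genuinely differ; but the argument needs only the inclusion of sheaves there, which comes for free from $B'_{p}\subseteq\overline{A}_{p}$, so no finer analysis of those singularities is required.
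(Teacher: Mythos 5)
Your proposal is correct and takes essentially the same route as the paper: you use the same partial normalization $B'$ (the product of the coordinate rings of the components), anchor the chain $\C'_{X}\subseteq J\subseteq I\subseteq \C'_{B'/A}$ by applying Theorem~\ref{main} at both ends, and your explicit $H^{1}$-surjection squeeze is exactly the ``monotonicity of regularity and top-degree syzygy counts'' that the paper extracts from Proposition~\ref{Cohen-Macaulay}. The details you add --- Cohen--Macaulayness of the unmixed ideal $J$, the computation $(B'/A)_{0}\cong K^{t-1}$ with $\indeg(B'/A)=0$, and the local verification of $I\subseteq\C'_{B'/A}$ --- are correct and simply make explicit what the paper leaves to the reader.
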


In general the length $\delta$ of the conductor scheme of
a plane curve of degree $d$ ranges from
0 to ${d\choose 2}$---the latter being the case  when the curve is a union of lines. The regularity of a set of $\delta$ points in $\P^{2}_K$ ranges from $\delta$ down to
about $\sqrt{2\delta}$. Thus the statement that the regularity of the conductor is
bounded by $d-1$ (or $d-2$ in the case of geometrically irreducible curves) is quite strong.

Theorem \ref{main} says in particular that the reduced ideal of a set of singular
points on a reduced plane curve of degree $d$ has regularity at most $d-1$, and
regularity at most $d-2$ if the curve is geometrically reduced and irreducible.
Here is another version of this statement:

\begin{corollary}\label{indeg} If $I \subset K[x_0,x_1,x_2]$ is the reduced ideal of a finite
set of points in $\P^2_K$, then the symbolic square $I^{(2)}$ contains
no nonzero reduced form of degree $ \leq \reg I$ and no geometrically
reduced and irreducible form
of degree $\leq \reg I +1$.
\end{corollary}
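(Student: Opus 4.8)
The plan is to argue by contraposition, converting a low-degree element of $I^{(2)}$ into a plane curve that is so singular along the underlying points that the regularity bounds already recorded are violated. Write $\Gamma$ for the finite point set defining $I$, so that $I=I_{\Gamma}$ is its reduced ideal, and suppose $f\in I^{(2)}$ is a nonzero reduced form of degree $e$. First I would unwind the symbolic square: since $I$ is radical with associated primes exactly the homogeneous ideals of the points of $\Gamma$, membership $f\in I^{(2)}$ means $f$ lies in the square of the local maximal ideal at each such point, i.e. $f$ vanishes to order at least $2$ at every point of $\Gamma$ (this holds in every characteristic, as it is the condition $f\in\m_p^{2}$ locally). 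Because $f$ is square-free, the curve $C=V(f)\subset\P^{2}_{K}$ is a reduced plane curve of degree $e$, and a point of multiplicity $\geq 2$ on a reduced plane curve is a singular point by the Jacobian criterion; hence $\Gamma\subseteq{\rm Sing}(C)$, and in particular $C$ is singular.

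Next I would invoke the consequence of Theorem~\ref{main} stated just above the corollary: the reduced ideal of a set of singular points of a reduced plane curve of degree $e$ has regularity at most $e-1$. To apply this to $\Gamma$ itself rather than to the whole reduced singular locus, I would use that regularity of a finite point set does not increase under passage to a subset. Concretely, for $\Gamma\subseteq({\rm Sing}\,C)_{\red}$ the restriction map $H^{0}(\CO_{\P^2}(t))\to H^{0}(\CO_{\Gamma}(t))$ factors through $H^{0}(\CO_{({\rm Sing}\,C)_{\red}}(t))$, and the second factor is surjective; so surjectivity of the restriction to $({\rm Sing}\,C)_{\red}$ forces surjectivity of the restriction to $\Gamma$, whence by the cohomological criterion of Proposition~\ref{Cohen-Macaulay} one gets $\reg I_{\Gamma}\leq\reg I_{({\rm Sing}\,C)_{\red}}\leq e-1$. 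Therefore $e\geq\reg I+1>\reg I$, proving that $I^{(2)}$ contains no reduced form of degree $\leq\reg I$.

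For the second assertion I would run the identical argument beginning from a nonzero geometrically reduced and irreducible form $f\in I^{(2)}$ of degree $e$. Now $C=V(f)$ is a geometrically reduced and irreducible singular plane curve, so the sharper conclusion of Theorem~\ref{main}—the strict inequality in the geometrically irreducible case—applies and yields $\reg I_{({\rm Sing}\,C)_{\red}}\leq e-2$, hence $\reg I_{\Gamma}\leq e-2$ by the same monotonicity. This gives $e\geq\reg I+2>\reg I+1$, so $I^{(2)}$ contains no geometrically reduced and irreducible form of degree $\leq\reg I+1$.

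The only genuine content beyond degree bookkeeping is the translation between $f\in I^{(2)}$ and $\Gamma\subseteq{\rm Sing}(C)$, together with the subset-monotonicity of regularity for points; once these are in place the appeal to the already-established bounds is immediate. I expect the subtlest points to be checking that the form's being geometrically reduced and irreducible transfers cleanly to the curve (so that the strict bound of Theorem~\ref{main} is genuinely available rather than only the weaker one), and confirming that the monotonicity step remains valid when comparing the reduced $\Gamma$ against a possibly non-reduced ambient singular scheme—both of which are controlled by the cohomological characterization in Proposition~\ref{Cohen-Macaulay}.
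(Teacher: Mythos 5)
Your proof is correct and is essentially the paper's own argument: the paper offers no separate proof, presenting Corollary~\ref{indeg} as an immediate consequence of Corollary~\ref{plane curves} via the remark just above it (the reduced ideal of a set of singular points on a reduced plane curve of degree $e$ has regularity at most $e-1$, and at most $e-2$ when the curve is geometrically reduced and irreducible). Your contrapositive --- translating $f\in I^{(2)}$ into a curve singular along $\Gamma$ and invoking subset-monotonicity of regularity for finite schemes via Proposition~\ref{Cohen-Macaulay} --- is exactly the intended route, with the degree bookkeeping done correctly.
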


This consequence of Corollary \ref{plane curves} seems to beg for generalization. What can one say for higher symbolic powers, or more variables? A famous conjecture of Nagata states that if $J$ is the defining ideal of $\delta$ \emph{general} points in $\P^{2}$ then the smallest degree of a form contained in $J^{(m)}$ is at least $m\sqrt \delta$ (see for example Harbourne~\cite{Harbourne} for a discussion and recent results). Our result for $m=2$ deals only with reduced forms, but with arbitrary sets of points.

\begin{example} Let $S = K[x_0,x_1,x_2]$ be the homogeneous coordinate ring of $\P^{2}_K$, and let $M$ be a generic map
$$
M: S(-7)\oplus S(-8)\longrightarrow S(-5)^{3}.
$$
Let $I$ be the ideal generated by
the three $2\times 2$ minors of $M$, which have degree 5.
From the formulas in Eisenbud~\cite{Eisenbud}, Chapter 3 one sees
that $I$ is the homogeneous ideal of a set $\Gamma$ of 19 reduced points.
Further, $M$ is the matrix of syzygies on the ideal $I$, and thus $\reg I = 7$.

Using Macaulay2 one can check that the smallest degree of a curve passing doubly through the points of $\Gamma$ is 10,
and that there is such a curve $X$ of degree 10 whose singularities consist of ordinary nodes at the 19 points of $I$. From Corollary \ref{plane curves} we see that $X$ is irreducible
and that the normalization $\overline X$ has
$h^{0}(\cO_{\overline X}(1)) = 3$; that is, $X$ is not the projection of a nondegenerate curve of
degree 10 in $\P^{3}_K$.
\end{example}

\smallskip

We can prove a weaker version of Theorem~\ref{main} without the Cohen-Macaulay assumption on $A$. Recall that a positively graded Noetherian $K$-algebra $A$ with homogeneous maximal ideal $A_{+}$ and graded canonical module $\omega_{A}$
is called \emph{quasi-Gorenstein} if $\omega_{A}\cong A(a)$ for some $a\in \ZZ$, called the $a$-invariant of $A$.
By local duality, $a = \reg H^{\dim A}_{A_{+}}(A)$, so, in case $A$ is generated in degree 1,
we have $a+\dim A\leq \reg A$, with equality when $A$ is Gorenstein.

\begin{theorem}\label{quasi-main} Let $X\subset \P^{r}_{K}$ be a geometrically reduced scheme
with homogeneous coordinate ring $A$.
Suppose that $B$ is a graded Cohen-Macaulay ring
with $A\varsubsetneq B\subset \overline A$.

If $A$ is quasi-Gorenstein,
then $\C_{B/A}$ is a Cohen-Macaulay $A$-module and an unmixed ideal in $A$ of codimension 1.
Moreover, $\reg \C'_{B/A} \leq \reg X$.
\end{theorem}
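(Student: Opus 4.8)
The plan is to realize the conductor as a dual module and to read off both conclusions from the canonical module of $B$. First I would note that, because $A$ is reduced and $B$ is birational over $A$, we have $B\subset\overline A\subset\operatorname{Quot}(A)$, and evaluation at $1$ identifies
$$\C_{B/A}=\operatorname{Hom}_A(B,A),$$
since an $A$-linear map $B\to A$ is, after localizing, multiplication by an element of $\operatorname{Quot}(A)$ carrying $B$ into $A$. Using the quasi-Gorenstein hypothesis in the form $\omega_A\cong A(a)$, this becomes
$$\C_{B/A}\cong\operatorname{Hom}_A(B,\omega_A)(-a),$$
so everything reduces to the identification $\operatorname{Hom}_A(B,\omega_A)\cong\omega_B$, after which $\C_{B/A}\cong\omega_B(-a)$ and both assertions become statements about $\omega_B$.

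The main obstacle is establishing $\operatorname{Hom}_A(B,\omega_A)\cong\omega_B$ \emph{without} a Cohen-Macaulay hypothesis on $A$, where the usual duality for finite morphisms does not directly apply. I would argue with the change-of-rings spectral sequence
$$E_2^{p,q}=\operatorname{Ext}_A^p\!\big(B,\operatorname{Ext}_S^q(A,\omega_S)\big)\Rightarrow\operatorname{Ext}_S^{p+q}(B,\omega_S)$$
arising from the adjunction $\mathbf{R}\!\operatorname{Hom}_S(B,\omega_S)\simeq\mathbf{R}\!\operatorname{Hom}_A\big(B,\mathbf{R}\!\operatorname{Hom}_S(A,\omega_S)\big)$. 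Since $\codim A=c$, the terms $\operatorname{Ext}_S^q(A,\omega_S)$ vanish for $q<c$ and equal $\omega_A$ for $q=c$; hence in total degree $c$ the only surviving term is $E_2^{0,c}=\operatorname{Hom}_A(B,\omega_A)$, and the differentials into and out of it land in terms with second index $<c$, which vanish. As $B$ is Cohen-Macaulay of codimension $c$ in $S$, the abutment in degree $c$ is $\omega_B=\operatorname{Ext}_S^c(B,\omega_S)$, giving the isomorphism.

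Granting $\C_{B/A}\cong\omega_B(-a)$, the first assertion is immediate: $B$ is Cohen-Macaulay of dimension $d=\dim A$, so $\omega_B$ is a maximal Cohen-Macaulay $B$-module, and since depth is insensitive to the finite extension $A\subset B$ it is a maximal Cohen-Macaulay $A$-module; thus $\C_{B/A}$ is a Cohen-Macaulay $A$-module. For unmixedness I would use $0\to\C_{B/A}\to A\to A/\C_{B/A}\to 0$, noting that the support of $A/\C_{B/A}=S/\C'_{B/A}$ is the non-normal locus and so has codimension $\ge 1$. If $\p$ were an associated prime of $A/\C_{B/A}$ of height $\ge 2$, then the depth lemma for the localized sequence, with $\operatorname{depth}(\C_{B/A})_\p=\operatorname{height}\p\ge 2$ (maximal Cohen-Macaulayness, using that $A$ is equidimensional since $\omega_A$ is free) and $\operatorname{depth}A_\p\ge 1$ ($A$ is reduced, hence $S_1$, at a prime of positive height), would force $\operatorname{depth}(A/\C_{B/A})_\p\ge 1$, contradicting $\p\in\operatorname{Ass}$. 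Hence every associated prime has height exactly $1$ and $\C_{B/A}$ is unmixed of codimension $1$.

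Finally, for the regularity bound I would compute $\reg\omega_B$ via local cohomology. Since $\omega_B$ is maximal Cohen-Macaulay, $H^i_\m(\omega_B)=0$ for $i\ne d$, and graded local duality over $S$ identifies $H^d_\m(\omega_B)$ with the $K$-dual of $\operatorname{Ext}_S^c(\omega_B,\omega_S)=\omega_{\omega_B}=B$; as $B$ is concentrated in non-negative degrees with $B_0\ne 0$, the top nonvanishing degree of $H^d_\m(\omega_B)$ is $0$, so $\reg\omega_B=d$ and $\reg\big(\omega_B(-a)\big)=d+a$. Because $A$ is quasi-Gorenstein and generated in degree $1$, we have $d+a\le\reg A$, as recorded above. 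Applying the regularity estimate to $0\to\C_{B/A}\to A\to A/\C_{B/A}\to 0$ gives $\reg(A/\C_{B/A})\le\max\{\reg A,\ \reg\C_{B/A}-1\}=\reg A$, whence $\reg\C'_{B/A}=\reg(S/\C'_{B/A})+1=\reg(A/\C_{B/A})+1\le\reg A+1=\reg X$.
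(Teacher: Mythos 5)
Your proof is correct, and it takes a genuinely different route from the paper's. The paper never exhibits the conductor as a canonical module directly: it first extends the ground field (this is exactly where geometric reducedness enters), chooses a Noether normalization over which the quotient field extension is separable so as to produce a hypersurface ring $A'\subset A$ birational to $A$, shows via $\Hom_{A'}(A,A')\cong \omega_A(-a')$ that $\C_{A/A'}$ is generated by one homogeneous nonzerodivisor $u$ of degree $a'-a$, invokes the transitivity formula $\C_{B/A'}=\C_{B/A}\,u$ (Lemma~\ref{extension}), and then applies Theorem~\ref{main} to the Gorenstein ring $A'$, finishing with a chain of regularity estimates from short exact sequences. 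You instead prove $\C_{B/A}=\Hom_A(B,A)\cong \Hom_A(B,\omega_A)(-a)\cong \omega_B(-a)$ outright; the key observation is that the change-of-rings spectral sequence, which the paper also mentions in the proof of Theorem~\ref{main} but only exploits under the Gorenstein hypothesis, still degenerates at the corner term $E_2^{0,c}$ (here $c=\codim X$) when $A$ is not Cohen--Macaulay, because $\Ext^q_S(A,\omega_S)=0$ for $q<c$ kills everything else in total degree $c$. Your route buys real content: it avoids the field extension and the separability/primitive-element step entirely, so geometric reducedness is never used and plain reducedness suffices; and it yields strictly more than the stated theorem, namely the graded isomorphism $\C_{B/A}\cong\omega_B(-a)$ and the exact value $\reg \C_{B/A}=\dim A+a$, independent of $B$. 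What the paper's route buys is economy of means --- no hyperext spectral sequence over a non-Cohen--Macaulay base, direct reuse of Theorem~\ref{main}, and the identity $\C_{B/A'}=\C_{B/A}u$, which embeds the quasi-Gorenstein situation into the Gorenstein one. Two of your steps are terse but do hold up and deserve the flag you gave them: quasi-Gorensteinness forces $A$ to be equidimensional, since $\omega_A=\Ext^c_S(A,\omega_S)$ vanishes at minimal primes of non-maximal dimension while $A(a)$ does not, and you need this so that $\dim A_{\mathfrak p}=\operatorname{height}\mathfrak p$ and localizations of the maximal Cohen--Macaulay module $\C_{B/A}$ remain maximal Cohen--Macaulay in the unmixedness argument; and your unmixedness proof gets by with $S_1$ of $A$ (from reducedness) together with that Cohen--Macaulayness, where the paper appeals instead to $S_2$ of $A$ inherited from $\omega_A\cong A(a)$.
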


\smallskip

We can also say something about
the regularity of the ring $B$, considered as a graded module over $A$; see also
Ulrich and Vasconcelos [2004], the proof of 2.1(b).

\begin{proposition}\label{regB} Let $X\subset \P^{r}_{K}$ be a reduced scheme
with
homogeneous coordinate ring $A$.
Suppose that $B$ is a graded Cohen-Macaulay ring
with $A\varsubsetneq B\subset \overline A$.

If $A$ is Cohen-Macaulay, then the $A$-module $B/A$ is Cohen-Macaulay of codimension
$1$. Furthermore,
$$
\reg (B/A) \leq \reg X -2 \ \ {\rm and} \ \
\reg B \leq \reg X   - 1 \, .
$$
\end{proposition}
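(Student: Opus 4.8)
The plan is to extract everything from the graded local cohomology of the short exact sequence
$$0 \to A \to B \to B/A \to 0$$
of finitely generated graded $A$-modules, taken with respect to the homogeneous maximal ideal $\m$. Set $n = \dim A$. Two structural inputs drive the argument. First, since $A$ is reduced and $A\varsubsetneq B\subset \overline A$, the extension is finite and birational, so $\overline A$ (hence $B$) sits inside the total quotient ring of $A$; clearing a common denominator produces a nonzerodivisor $s\in A$ with $sB\subset A$, i.e. $s\in\C_{B/A}$. Thus $\dim (B/A) = \dim (A/\C_{B/A}) \le n-1$, whence $H^n_{\m}(B/A)=0$. Second, because $A\to B$ is finite with $\ann_A B = 0$ we have $\dim_A B = n$, and the hypothesis that $B$ is a Cohen-Macaulay ring forces ${\rm depth}_A B = {\rm depth}_B B = n$; hence $B$ is a maximal Cohen-Macaulay $A$-module.

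With these in hand I would write down the long exact sequence
$$\cdots \to H^i_{\m}(A) \to H^i_{\m}(B) \to H^i_{\m}(B/A) \to H^{i+1}_{\m}(A) \to \cdots.$$
Since $A$ and $B$ are both Cohen-Macaulay $A$-modules of dimension $n$, all terms $H^i_{\m}(A)$ and $H^i_{\m}(B)$ vanish except at $i=n$. For $i<n-1$ this squeezes $H^i_{\m}(B/A)$ between two zero modules, and we already know $H^n_{\m}(B/A)=0$; therefore $H^i_{\m}(B/A)=0$ for every $i\ne n-1$, which is exactly the assertion that $B/A$ is Cohen-Macaulay of codimension $1$. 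The same sequence then collapses to a short exact sequence
$$0 \to H^{n-1}_{\m}(B/A) \to H^n_{\m}(A) \to H^n_{\m}(B) \to 0,$$
the crucial feature being the injection of $H^{n-1}_{\m}(B/A)$ into $H^n_{\m}(A)$.

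Finally I would read off the regularities. Because $B/A$ has local cohomology only in cohomological degree $n-1$, $\reg(B/A)$ equals the top nonzero degree of $H^{n-1}_{\m}(B/A)$ plus $(n-1)$; the injection above bounds that top degree by $a_n(A)$, the $a$-invariant of $A$. Since $A$ is Cohen-Macaulay and generated in degree $1$, $\reg A = a_n(A)+n$, so $a_n(A) = \reg A - n = \reg X - 1 - n$, giving $\reg(B/A)\le \reg X - 2$. For $B$ itself I would apply the standard estimate $\reg B \le \max\{\reg A, \reg(B/A)\}$ coming from the same long exact sequence, together with $\reg A = \reg X - 1$, to obtain $\reg B \le \reg X - 1$. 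The only genuinely delicate points are the two structural inputs in the first paragraph—that the conductor contains a nonzerodivisor and that $B$ is maximal Cohen-Macaulay over $A$—after which everything is bookkeeping with $a$-invariants; I expect the verification that ${\rm depth}_A B = n$, i.e. that the Cohen-Macaulay property of the ring $B$ transfers to $B$ viewed as an $A$-module, to be the step most in need of care.
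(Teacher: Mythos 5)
Your argument is correct, and it reaches the paper's conclusions by a technically different route. For the first assertion the two proofs essentially agree: the paper invokes the depth count from the proof of Theorem~\ref{main} applied to $0\to A\to B\to B/A\to 0$, while you run the same count in the language of graded local cohomology, with the conductor-nonzerodivisor observation making the dimension drop $\dim (B/A)\leq \dim A-1$ explicit. The regularity bounds are where you diverge. The paper reduces to an infinite field, chooses a homogeneous Noether normalization $A'\subset A$, notes that the Cohen--Macaulay rings $A$ and $B$ are maximal Cohen--Macaulay and hence \emph{free} as graded $A'$-modules, and reads the regularity of the Cohen--Macaulay module $B/A$ off the resulting length-one free resolution $0\to A\to B\to B/A\to 0$ over $A'$, obtaining $\reg_{A'}(B/A)\leq \reg_{A'}A-1$ and transferring this through the finite extension $A'\subset A$. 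You instead stay inside the long exact sequence of $H^{\bullet}_{\m}$, extract the injection $H^{n-1}_{\m}(B/A)\hookrightarrow H^{n}_{\m}(A)$, and finish with $a$-invariant bookkeeping; by graded local duality over $A'$ this injection is exactly the dual of the paper's reading of the last module in the resolution, so the two mechanisms are dual to one another rather than independent. What your version buys: no infinite-field reduction, no choice of Noether normalization, and a transparent reason why regularity over $A$, $A'$, or $S$ all agree (the local cohomology is computed with respect to the same irrelevant ideal throughout). What the paper's version buys: a completely elementary free-module picture, parallel in style to the proof of Theorem~\ref{main}. The step you rightly flag as delicate---that $B$ Cohen--Macaulay as a ring gives ${\rm depth}_{A}B=n$, i.e.\ $B$ maximal Cohen--Macaulay over $A$---is needed in both proofs (the paper uses it when asserting freeness over $A'$), and it is sound here: $A$ is a graded Cohen--Macaulay quotient of $S$, hence equidimensional, and $B\subset\overline A$ is birational over $A$, so every component of $B$ has dimension $n$ and the radical of $\m B$ is the graded maximal ideal of $B$, making ${\rm depth}_A$ computable from the Cohen--Macaulayness of $B$.
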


\medskip
\bigskip

We now proceed to the proofs.

\begin{proof}[Proof of Theorem~\ref{main}] To simplify the notation we write $\C'$ for $\C'_{B/A}$ and
$\C$ for $\C_{B/A}$. We have $\C \cong
{\rm{Hom}}_A(B,A)$. Since $B$ is a finite Cohen-Macaulay $A$-module and $\dim_A B = \dim A$,
we have
$\Ext^{1}_{A}(B,A) = 0$.  Therefore, applying the long exact sequence in $\Ext_A^{\bf{\cdot}}( -,A)$ to the  short exact sequence
$$
0\to A
\longrightarrow B
\longrightarrow B/A
\to 0 \, ,
$$
we obtain a homogenous isomorphism $A/\C \cong \Ext^{1}_{A}(B/A, A)$.
Since $A$ is Gorenstein, we have
$\omega_{A} = A(a)$.
Thus
$$A/\C
\cong\Ext^{1}_{A}(B/A, \omega_{A})(-a) \, .
$$
As $A$ is a Cohen-Macaulay $S$-module of codimension $c$,
local duality (or the change of rings spectral sequence) in turn gives
\begin{align*}\Ext^{1}_{A}(B/A, \omega_{A})
&\cong \Ext^{1+c}_{S}(B/A, \omega_{S})\\
&\cong \Ext^{1+c}_{S}(B/A, S)(-r-1) \, ,
\end{align*}
and we conclude that
\begin{align*}
A/\C
&\cong \Ext^{1+c}_{S}(B/A, S)(-a -r -1)
 \, .
\end{align*}

From the above short exact sequence it also follows that $B/A$ is a Cohen-Macaulay
$S$-module of codimension $c+1$ and hence has a minimal homogeneous free $S$-resolution
$F_{\mathbb{\cdot}}$ of length $c+1$. Now the isomorphism
$$S/\C'\cong A/\C \cong \Ext^{1+c}_{S}(B/A, S)(-a -r -1)$$
implies that dualizing $F_{\mathbb{\cdot}}$  into $S(-a-r-1)$ gives a minimal homogeneous
free $S$-resolution of $S/\C'$. In particular,
$\C'$ is a perfect ideal of codimension $c+1$ and
$$ \Tor^S_c(\C',K) \cong \Tor^S_{c+1}(S/\C',K) \cong
{\Hom}_{K}(B/A\otimes_SK,K)(-a -r -1) \, .$$
Since $A$ is Cohen-Macaulay, the degree shift can be rewritten as
\begin{align*} -a -r -1 &= -a  -  \dim A -c \\
&=-\reg A -c\\
&=-\reg X +1 -c \, .
\end{align*}
It follows that the two graded vector spaces
$$\Tor^S_c(\C',K)(\reg X -1 +c) \ \ {\rm and} \ \ \ (B/A)\otimes_AK $$
are dual to one another.

The maximal generator degree of the former controls the regularity of
$\C'$ because this ideal is perfect.
Therefore
\begin{align*}
\reg \C' &= {\rm{maxdeg}}\  \Tor^S_c(\C',K) - c \\
&= -\indeg(B/A) + \reg X - 1 \, ,
\end{align*}
as claimed. If $X$ is geometrically reduced and irreducible, then $A_0=K=(\overline A)_0$
and therefore $\indeg(B/A) >0$. The remaining assertions of the Theorem are now clear as
well.
\end{proof}

\begin{proof}[Proof of Corollary~\ref{plane curves}]
The last statements follow immediately from the formulas for
the regularity of $\C'_X$ and the number of its degree $d$ syzygies, so we prove those.

Let $A$ be the homogeneous coordinate ring of $X$.
For $m<d$ we have $\dim_K A_{m} = {m+2\choose 2}$, so the formula for
$\reg \C'_X$ follows at once from the one given in Theorem~\ref{main}.

If $B=\overline A$, then the number of homogeneous generating syzygies of
$\C'_X$ of degree $d$ is
${\rm dim}_K(B/A)_0$, again by Theorem~\ref{main}. On the other hand, $(B/A)_{0} = H^{0}(\pi_{*}\cO_{\overline X}/\cO_{X})$ and the dimension of this $K$-vector
space is one less than the number of components of $X$ over the
algebraic closure of $K$.
\end{proof}

\begin{proof}[Proof of Corollary~\ref{plane curves 2}]
Suppose that $X$ has $e$ irreducible components. Let $A$ be the homogeneous coordinate
ring of $X$ and let $B'$ be the direct product
of the homogeneous coordinate rings of the irreducible components of $X$,
a ``partial normalization'' of $A$. Notice that $I \subset \C'_{B'/A}$. By
Theorem~\ref{main} the ideal $\C'_{B'/A}$ has regularity
$d-1$ and its syzygy module has exactly $e-1$ homogeneous basis elements of degree $d$.
The same holds true for the conductor $\C'_X$.  Proposition~\ref{Cohen-Macaulay}
implies that the regularity and the number of syzygies of top degree are both
monotonic in the sequence of ideals $\C'_{X}\subset J\subset I \subset \C'_{B'/A}$, yielding the
desired formulas for the regularity and the number of degree $d$ syzygies of $J$.
\end{proof}

\begin{proof}[Proof of Corollary~\ref{improved Kloosterman}]
From Corollary \ref{plane curves} we know that $\reg I_{\Gamma} \leq d-1$ and
that this inequality is strict for irreducible curves.
If the curve is reducible and its components meet in ordinary nodes, then the part of the conductor
scheme concentrated at the points of intersection is the reduced scheme of the
intersection points itself, so we may apply Corollary~\ref{plane curves 2}.
\end{proof}

Before proving Theorem~\ref{quasi-main} we need a lemma on conductors, see also
Kunz [2005], 17.6.
Generalizing our previous notation, we write
$\C_{B/A} : = A:_AB$ for the conductor of any ring extension $A \subset B$.
Again, $\C_{B/A}$ is the unique largest $B$-ideal contained in $A$, and, whenever $A$
is Noetherian, this ideal contains a non zerodivisor of $B$ if and only
if the extension $A \subset B$ is finite and birational.

\begin{lemma}\label{extension} Let $A \subset B \subset C$ be extensions of rings.
If $\C_{B/A} = Bu$ for some non zerodivisor of $C$, then
$$ \C_{C/A} = \C_{C/B} \, \C_{B/A} \, .$$
\end{lemma}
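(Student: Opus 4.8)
The plan is to prove the two inclusions of the asserted equality separately, with only the reverse one requiring the hypothesis that $\C_{B/A} = Bu$.

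For the inclusion $\C_{C/B}\,\C_{B/A}\subseteq \C_{C/A}$, I would argue directly, without using the principality hypothesis. Given $x\in\C_{C/B}$ and $y\in\C_{B/A}$, one has $xC\subseteq B$ and $yB\subseteq A$; since $x\in B$ and $y\in\C_{B/A}$ we get $xy\in A$, while $(xy)C=y(xC)\subseteq yB\subseteq A$. Hence $xy\in\C_{C/A}$, and the inclusion follows by additivity.

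The substance is the reverse inclusion $\C_{C/A}\subseteq \C_{C/B}\,\C_{B/A}$. First I would record that $\C_{C/A}\subseteq\C_{B/A}=Bu$: indeed, if $z\in\C_{C/A}$ then $z\in A$ and $zB\subseteq zC\subseteq A$, so $z\in\C_{B/A}$. Writing $z=bu$ with $b\in B$, it suffices to show $b\in\C_{C/B}$, since then $z=b\,u$ exhibits $z$ as the product of an element of $\C_{C/B}$ with the element $u\in Bu=\C_{B/A}$.

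The key observation, and the step I expect to be the main obstacle, is that for every $c\in C$ the product $zc$ lies not merely in $A$ but in $\C_{B/A}$: since $cB\subseteq C$ we have $(zc)B=z(cB)\subseteq zC\subseteq A$, and $zc\in A$ as well, so $zc\in\C_{B/A}=Bu$. Thus $zc=b'u$ for some $b'\in B$. Now I would cancel $u$: from $u(bc)=(bu)c=zc=b'u=ub'$ together with the hypothesis that $u$ is a non zerodivisor of $C$, we conclude $bc=b'\in B$. As $c\in C$ was arbitrary, $bC\subseteq B$, that is, $b\in\C_{C/B}$, which completes the argument. The whole point is that the principal generator $u$ of $\C_{B/A}$ can be cancelled precisely because it is a non zerodivisor in the larger ring $C$; recognizing that one must first promote $zc\in A$ to $zc\in\C_{B/A}$, so as to bring that generator into play, is the crux.
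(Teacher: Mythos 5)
Your proof is correct and follows essentially the same route as the paper's: both rest on the observation that $\C_{C/A}\subseteq \C_{B/A}=Bu$, factor out $u$, and use that $u$ is a non zerodivisor of $C$ to cancel it and conclude the cofactors lie in $\C_{C/B}$. The only difference is presentational --- the paper writes $\C_{C/A}=\C u$ for a subset $\C\subseteq B$ and shows $\C$ is a $C$-ideal all at once, whereas you carry out the same cancellation element by element.
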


\begin{proof} For any rings $A\subset B\subset C$ the inclusion
 $\C_{C/B} \, \C_{B/A}\subset \C_{C/A}$ follows from the definition
 of the conductor.
If $\C_{B/A} = Bu$, for some non zerodivisor $u\in C$,
then $\C_{C/A} \subset \C_{B/A}=Bu$ and we may write
$\C_{C/A} = \C u$ for some subset $\C$ of $B$. This subset is a
$C$-ideal because $\C_{C/A}$ is a $C$-ideal and $u$ is a non zerodivisor of $C$.
It follows that $\C \subset \C_{C/B}$ and therefore $\C_{C/A} \subset \C_{C/B} \, u
\subset \C_{C/B} \, \C_{B/A}$.
\end{proof}

\smallskip

\begin{proof}[Proof of Theorem~\ref{quasi-main}]
Since $X$ is geometrically reduced, we may
extend the ground field and assume that it is algebraically closed. We may then choose
a homogeneous Noether normalization inside $A$ over which the total ring of quotients of $A$ is separable,
and thus birational to a homogeneous hypersurface ring $A'\subset A$ that contains the Noether
normalization.

Write $\omega_{A} = A(a)$ and $\omega_{A'} = A'(a')$.
We claim that $\C_{A/A'}$ is generated by a homogeneous non zerodivisor $u$ on $B$ of degree $a'-a$.
This is because
$$
\C_{A/A'} \cong \Hom_{A'}(A,A') \cong  \Hom_{A'}(A, \omega_{A'}(-a')) \cong \omega_{A}(-a') \cong A(a-a').
$$
It follows from Lemma~\ref{extension} that $\C_{B/A'} = \C_{B/A} \, u$. By Theorem~\ref{main}
the $A'$-module $A'/\C_{B/A'}$ is Cohen-Macaulay of codimension $1$,
so $\C_{B/A'}$ is a maximal Cohen-Macaulay $A'$-module. Therefore
$\C_{B/A}$ is also a maximal Cohen-Macaulay module over $A'$ and thus over $A$.
This also shows that $\C_{B/A}$ is an unmixed ideal in $A$ of codimension 1,
because $\omega_A$ and hence $A$ satisfies Serre's condition $S_2$.

For the second statement, notice that
$$
\reg \C'_{B/A} = \reg(S/\C'_{B/A}) +1 = \reg(A/\C_{B/A}) +1 \, .
$$
The exact sequence
$$
0\to \C_{B/A}
\longrightarrow A
\longrightarrow A/\C_{B/A}
\to 0
$$
in turn shows that
$$
\reg(A/\C_{B/A}) \leq \max\{ \reg \C_{B/A} -1 , \, \reg A \} \, .
$$
Since $\C_{B/A'} = \C_{B/A} \, u$ with $u$ a homogeneous
non zerodivisor on $B$ of degree $a'-a$,
we see that
$
\reg \C_{B/A} = \reg \C_{B/A'}+a-a'
\, .
$
Combining this with the two displayed inequalities we obtain
$$
\reg \C'_{B/A} \leq \max \{ \reg \C_{B/A'} +a-a', \, \reg A +1 \} \, .
$$
Again, the exact sequence
$$
0\to \C_{B/A'}
\longrightarrow A'
\longrightarrow A'/\C_{B/A'}
\to 0
$$
gives
$$
\reg \C_{B/A'} \leq \max \{\reg(A'/\C_{B/A'}) +1, \,  \reg A' \} =\max \{\C'_{B/A'}, \,  a' + \dim A' \} \, .
$$
Finally, we apply Theorem~\ref{main} to the extension
$A'\subset B\subset \overline {A'} = \overline A$ and obtain
$$
\reg \C'_{B/A'} \leq a'+\dim A' \, .
$$
Combining the last three displayed inequalities we deduce
$$
\reg \C'_{B/A} \leq  \max\{a+ \dim A, \, \reg A +1 \} = \reg A + 1 = \reg X \, ,
$$
as desired.
\end{proof}

\smallskip

\begin{proof}[Proof of Proposition~\ref{regB}]
One sees immediately, as in the proof of
Theorem~\ref{main}, that $B/A$ is a Cohen-Macaulay $A$-module of codimension $1$.
As for the other claims,
we may assume that the ground field $K$
is infinite. In this case $A$ admits a homogeneous Noether normalization $A'$.
Since the Cohen-Macaulay rings $A$ and $B$ are finitely
generated graded modules over the polynomial ring $A'$, it follows that they
are maximal Cohen-Macaulay modules and hence free. Thus
$$ 0 \to A \longrightarrow B $$
is a homogeneous free resolution of minimal length of the $A'$-module $B/A$. As the latter module is Cohen-Macaulay,
its regularity can be read from the last module in
the minimal homogeneous free resolution. It follows that $\reg _{A'}B/A  \leq \reg _{A'}A -1$.
Since $A$ is finite over $A'$, the regularity of an $A$-module is the same as
its regularity as an $A'$-module. Therefore $\reg_A B/A \leq \reg A -1$, which also
implies $\reg _AB \leq \reg A$.
\end{proof}

\section{Completeness of the Adjoint Series}\label{adjoint section}

If $X$ is irreducible, then Theorem~\ref{Kloosterman} can be deduced from a
classical result known as the ``completeness of the adjoint series''. We will explain how below, but first we explain the meaning of the terms.

An \emph{adjoint of degree $e$} is a form of degree $e$ satisfying an ``adjoint condition'' at each singularity. For example, at an ordinary node or cusp the adjoint condition is simply to vanish at the point; if the singularity is an ordinary $k$-fold point, then the
adjoint condition is vanishing to order $k-1$. In general  the adjoint condition is to
be contained in the local conductor ideal at the point. Thus, from our point of view, an adjoint is simply a form of degree $e$ contained in $\C_{X}$.

The adjoint conditions at the singular points of $X$ give rise to a divisor $D$ on $\overline X$, namely the zero locus of the pull-backs of all the (local) functions satisfying the adjoint conditions. The \emph{completeness of the adjoint series} is the statement that
the natural inclusion of $(\C_{X})_{e}$, the space of adjoints of degree $e$, to
$H^{0}(\cO_{\overline X}(e)(-D))$, is an isomorphism. From our point of view this is just
the statement that $\C_{X}$ is an ideal of both $A$, the coordinate ring of $X$, and
$B = \oplus_{e}H^{0}(\cO_{\overline X}(e))$, so that the natural inclusion
$\C \subset \C B$ is in fact an equality.

We can now deduce the case of Theorem~\ref{Kloosterman} when the curve $X$ is irreducible. First, it suffices to show that the set $\Gamma$ of all singular points of $X$ has regularity $\leq d-2$, since then any subset has regularity $\leq d-2$ as well. To do this we must show
that $H^{1}(\cI_{\Gamma}(d-3)) = 0$.
So it suffices to show that
the nodes and cusps impose independent conditions on the forms of degree $d-3$.

Under the hypothesis of the Theorem that $X$ has only ordinary nodes and cusps, $\cI_{\Gamma} = \widetilde{\C'_{X}}$.
In this case, the ``completeness of the adjoint series'' says that the vector space of
forms of degree $d-3$ vanishing on $\Gamma$ gives the complete canonical series on
$\overline X$, and thus has dimension genus$(X)$, which is
$$
{d-1\choose 2} - \deg \Gamma = \dim S_{d-3} - \deg \Gamma;
$$
that is, the conditions imposed by $\Gamma$ are independent, as required.

\section{Syzygies of an Almost Complete Intersection}

In the case where $X$ is a plane curve with only ordinary nodes as singularities, the conductor ideal
is the unmixed part of the Jacobian ideal, the ideal of partial derivatives of the defining
equation of $X$, and we can derive a surprising consequence for the syzygies of
these partial derivatives:

\begin{corollary}\label{derivatives} Let $X\subset \P^{2}_K$ be a reduced curve of degree $d$ over an algebraically closed field whose characteristic is not a divisor of $d$. Write $S=K[x_0,x_1,x_2]$
 for the homogeneous coordinate ring of $\P^2_K$. Assume that
$F(x_{0}, x_{1},x_{2})=0$ is the defining equation of $X$ and
that $X$ has only ordinary nodes as singularities. If
$$
\sum_{i=0}^{2}G_{i}\frac{\partial F}{\partial x_{i}} = 0
$$
is a homogeneous relation in $S$,
then $\deg G_{i}\geq d-2$ whenever $G_i \neq 0$, and equality is possible if and only if $X$ is reducible.
\end{corollary}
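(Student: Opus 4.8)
```latex
The plan is to exploit the stated structural fact that for a curve $X$ with only ordinary nodes, the conductor ideal $\C'_X$ is the unmixed part of the Jacobian ideal $J = (\partial F/\partial x_0, \partial F/\partial x_1, \partial F/\partial x_2)$. First I would set up the Jacobian ideal as an almost complete intersection: it is generated by three forms of degree $d-1$, and by the Euler relation $\sum_i x_i \,\partial F/\partial x_i = dF$ together with the hypothesis that $\mathrm{char}\,K \nmid d$, the three partials cut out a scheme of codimension $2$ (the singular locus) rather than defining a complete intersection. Thus $J$ is an almost complete intersection of codimension $2$ minimally generated by $3$ forms of degree $d-1$, and a homogeneous relation $\sum_i G_i\,\partial F/\partial x_i = 0$ is precisely an element of the first syzygy module of $J$. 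The goal reduces to bounding the degrees of these syzygies from below by $2d-3$ (so that $\deg G_i \geq (2d-3)-(d-1) = d-2$).

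The key input is Theorem~\ref{main} applied to $X$, which tells us that $\C'_X$ is a perfect ideal of codimension $2$ with $\reg \C'_X = d-1 - \indeg(B/A)$, where $B = \overline A$; in particular $\reg \C'_X \leq d-1$, and since the normalization has $(B/A)_0$ of dimension one less than the number of components, we have $\indeg(B/A) = 0$ exactly when $X$ is reducible and $\indeg(B/A) \geq 1$ when $X$ is irreducible. The strategy is to relate the syzygies of the almost complete intersection $J$ to the resolution of its perfect unmixed part $\C'_X$. I would invoke the announced general result (Proposition~\ref{reg from fitting}) on almost complete intersections with perfect saturation: the first syzygies of an almost complete intersection are controlled, up to a predictable degree shift coming from the structure of the saturation, by the resolution of the saturated ideal. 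Concretely, writing the resolution of $S/\C'_X$ and comparing with the Koszul-type resolution one builds for the almost complete intersection $J$, the generating syzygy degrees of $J$ are forced to be at least $\reg \C'_X + c = \reg \C'_X + 2$, which after subtracting the generator degree $d-1$ yields $\deg G_i \geq \reg \C'_X + 2 - (d-1) = \reg\C'_X - d + 3$.

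To finish, I would substitute the regularity formula. In the irreducible case $\reg \C'_X \leq d-2$ forces $\deg G_i \geq d-2$ with the bound attainable only through the reducible case; more care is needed since we want equality iff reducible. The precise bookkeeping is: the lowest syzygy degree equals $2d-3$ exactly when $\C'_X$ achieves regularity $d-1$, which by Corollary~\ref{plane curves} happens precisely when $X$ is geometrically reducible, i.e. reducible over the algebraically closed ground field. Thus equality $\deg G_i = d-2$ occurs if and only if $X$ is reducible, while for irreducible $X$ every nonzero $G_i$ has degree at least $d-1$ (strictly larger), and in all cases $\deg G_i \geq d-2$.

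The main obstacle I anticipate is making rigorous the passage from the resolution of the saturated, perfect ideal $\C'_X$ to the syzygy degrees of the unsaturated almost complete intersection $J$. The two ideals agree up to saturation but differ by a module $\C'_X/J$ supported at the irrelevant ideal, so one cannot simply read off $J$'s syzygies from those of $\C'_X$. The careful step is to use the almost-complete-intersection structure: the Koszul complex on the three partials gives a free presentation whose first syzygy module maps onto the syzygies of $J$, and one must track how the perfectness of the saturation $\C'_X$ constrains the minimal generators of this syzygy module in low degrees. I expect Proposition~\ref{reg from fitting} to be engineered exactly to perform this comparison, so the real work is verifying its hypotheses (the almost complete intersection has codimension $2$, minimally $3$ generators, and perfect saturation of codimension $2$) and correctly propagating the degree shifts and the sharp threshold distinguishing the reducible from the irreducible case.
```
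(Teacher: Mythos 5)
Your overall plan is the paper's: read the relation as a syzygy of the Jacobian almost complete intersection $J$, use that $\C'_X$ is the unmixed part of $J$ because the singularities are ordinary nodes, feed this into Proposition~\ref{reg from fitting}, and convert the regularity statement of Corollary~\ref{plane curves} (with its reducibility threshold) into a degree bound. But the central quantitative step is wrong, with the monotonicity inverted. You assert that the generating syzygy degrees of $J$ are at least $\reg \C'_X + 2$, hence $\deg G_i \geq \reg \C'_X - d + 3$; since $\reg \C'_X \leq d-1$, this bound never exceeds $2$ and cannot yield $d-2$. The actual content of Proposition~\ref{reg from fitting} is an \emph{equality} in which low syzygy degrees correspond to \emph{high} regularity of the unmixed part, not the other way around: after a linear change of variables making $\a := (\partial F/\partial x_0, \partial F/\partial x_1)$ a regular sequence of two forms of degree $d-1$ (so $\reg S/\a = 2(d-2)$), the proposition gives $\indeg (I_1(\phi)/\a) = 2(d-2) - \reg S/\C'_X$, where $\phi$ is the syzygy matrix of the three partials. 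Corollary~\ref{plane curves} gives $\reg S/\C'_X \leq d-2$ with equality if and only if $X$ is reducible, whence $\indeg (I_1(\phi)/\a) \geq d-2$ with equality exactly in the reducible case. Since the entries of any relation lie in $I_1(\phi)$, and $\a$ contains no nonzero element of degree below $d-1$, every nonzero $G_i$ has degree at least $d-2$, and a relation with $\deg G_i = d-2$ (read off from an entry of $\phi$) exists precisely when $X$ is reducible. Your closing sentence ("the lowest syzygy degree equals $2d-3$ exactly when $\reg \C'_X = d-1$") is this correct statement, but it is asserted without derivation and is inconsistent with the inequality you wrote two sentences earlier; as the proposal stands, it never establishes $\deg G_i \geq d-2$.

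Two smaller gaps. First, you claim the three partials always cut out a codimension $2$ scheme, but when $X$ is smooth the Jacobian ideal has codimension $3$ and Proposition~\ref{reg from fitting} does not apply; the paper disposes of this case separately (the partials then form a regular sequence, so all syzygies are Koszul with $\deg G_i = d-1 > d-2$, and a smooth plane curve is irreducible). Second, the identification of $\C'_X$ with the unmixed part of $J$ deserves the one-line verification the paper gives: $J \subset \C'_X$ since the partials lie in the conductor, equality holds locally at every codimension-two prime because the singularities are nodes, and $\C'_X$ is unmixed because it is perfect by Theorem~\ref{main}; the Euler relation and the hypothesis $\mathrm{char}\, K \nmid d$ enter here through $F \in J$. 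Relatedly, your anticipated "careful step" of mapping Koszul syzygies of $J$ onto those of the saturation is not how the proposition works: its engine is linkage, namely $\omega_{S/\C'_X} \cong ((\a : f_3)/\a)(-3 + 2(d-1))$ with $f_3 = \partial F/\partial x_2$, which converts $\indeg((\a:f_3)/\a)$ directly into $\reg S/\C'_X$, and the column analysis of $\phi$ identifies $\indeg((\a:f_3)/\a)$ with $\indeg(I_1(\phi)/\a)$.
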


This result was proven independently, in characteristic zero, in Dimca and Sticlaru~\cite{Dimca-Sticlaru} (Theorem 4.1), using Hodge theory. We deduce it from a much more general statement about almost complete intersections given in Proposition~\ref{reg from fitting}.


\begin{proposition}\label{reg from fitting}
Let $S = K[x_{0},\dots, x_{r}]$ be a polynomial ring
 over a field and let $I\subset S$ be a homogeneous perfect
ideal of codimension $g$.
Let $f_{1},\dots,f_{g+1}$ be homogeneous elements of $I$ such that
$f_{1},\dots,f_{g}$ form a regular sequence. Set $\a := (f_{1}, \dots,f_{g})\subset J := (f_{1},\dots,f_{g+1})$. If
$I$ is the unmixed part of $J$, meaning the intersection of the primary components of $J$
having codimension $g$,
then
$$
\omega_{S/I} = ((\a:f_{g+1})/\a)(-r-1+\sum_{i=1}^{g}\deg f_{i}) \, .
$$
Furthermore,
$$
\reg S/I = \reg S/\a -\indeg ((\a:f_{g+1})/\a)\, ,
$$
and if $f_{g+1}$ has maximal degree among the $f_{i}$,
this can be written as
$$
\reg S/I = \reg S/\a -\indeg (I_{1}(\phi)/\a) \, ,
$$
where $\phi$ is a matrix of homogeneous syzygies of $f_1, \ldots,f_{g+1}$ and $I_{1}(\phi)$ denotes
the ideal generated by the entries of $\phi$.
\end{proposition}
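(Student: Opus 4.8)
The plan is to obtain all three formulas from a single liaison computation of $\omega_{S/I}$, then read off the regularity from graded local duality. Since $f_1,\dots,f_g$ is a regular sequence, $S/\mathfrak a$ is a complete intersection, hence Gorenstein, with $\omega_{S/\mathfrak a}\cong (S/\mathfrak a)(-r-1+\sum_{i=1}^{g}\deg f_i)$ and $\reg S/\mathfrak a=\sum_{i=1}^{g}(\deg f_i-1)$. Because $I$ is perfect of codimension $g$, the ring $S/I$ is a maximal Cohen--Macaulay module over $S/\mathfrak a$ (same dimension, and $S/I$ Cohen--Macaulay), so $\Ext^{j}_{S/\mathfrak a}(S/I,\omega_{S/\mathfrak a})=0$ for $j>0$. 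Applying $\Hom_{S/\mathfrak a}(-,\omega_{S/\mathfrak a})$ to $0\to I/\mathfrak a\to S/\mathfrak a\to S/I\to 0$ therefore identifies $\omega_{S/I}$ with the kernel, and since a homomorphism $S/\mathfrak a\to\omega_{S/\mathfrak a}$ is multiplication by an element of $\omega_{S/\mathfrak a}$ that kills $I/\mathfrak a$, this kernel is $((\mathfrak a:I)/\mathfrak a)(-r-1+\sum\deg f_i)$.

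A key input is then the identity $\mathfrak a:f_{g+1}=\mathfrak a:I$, and this is exactly where the hypothesis that $I$ is the unmixed part of $J$ must be used. First I would note $\mathfrak a:J=\mathfrak a:(\mathfrak a+(f_{g+1}))=\mathfrak a:f_{g+1}$, and that $f_{g+1}\in I$ gives $J\subseteq I$, hence $\mathfrak a:I\subseteq\mathfrak a:J$. For the reverse inclusion I would localize at each minimal prime $\mathfrak p$ of $\mathfrak a$ (these are the only associated primes, as $S/\mathfrak a$ is Cohen--Macaulay): the primary components of $J$ of codimension $>g$ vanish after localizing at such a $\mathfrak p$, so $J_{\mathfrak p}=I_{\mathfrak p}$, and because $S/\mathfrak a$ has no embedded primes, membership in $\mathfrak a$ can be checked at these $\mathfrak p$. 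This gives $\mathfrak a:J=\mathfrak a:I$, and substituting produces the first formula $\omega_{S/I}=((\mathfrak a:f_{g+1})/\mathfrak a)(-r-1+\sum\deg f_i)$.

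The regularity formula is then formal. For a Cohen--Macaulay graded module $M$, graded local duality gives $\reg M=\dim M-\indeg\omega_M$ (only $H^{\dim M}_{\mathfrak m}(M)$ is nonzero, and its top degree is $-\indeg\omega_M$). Applying this to $M=S/I$, the twist combines with the dimension as $\dim(S/I)+(-r-1+\sum\deg f_i)=\sum\deg f_i-g=\reg S/\mathfrak a$, so
\[
\reg S/I=\reg S/\mathfrak a-\indeg((\mathfrak a:f_{g+1})/\mathfrak a),
\]
as claimed.

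For the final reformulation I would read $(\mathfrak a:f_{g+1})/\mathfrak a$ off the syzygies of $f_1,\dots,f_{g+1}$: sending $(a_1,\dots,a_{g+1})$ to its last coordinate $a_{g+1}$ realizes $\mathfrak a:f_{g+1}$ as the image, so the last coordinates of the columns of $\phi$ generate $\mathfrak a:f_{g+1}$; since these are entries of $\phi$ and the Koszul syzygies already force $\mathfrak a\subseteq I_1(\phi)$, one gets $(\mathfrak a:f_{g+1})/\mathfrak a\subseteq I_1(\phi)/\mathfrak a$ and hence $\indeg(I_1(\phi)/\mathfrak a)\le\indeg((\mathfrak a:f_{g+1})/\mathfrak a)$. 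The step I expect to be the main obstacle is the reverse inequality, which is precisely where maximality of $\deg f_{g+1}$ enters: in a homogeneous syzygy of internal degree $\delta$ the last coordinate has degree $\delta-\deg f_{g+1}$, the smallest among all entry degrees, and lies in $\mathfrak a:f_{g+1}$. If it lies outside $\mathfrak a$ the bound on its degree is immediate; if it lies in $\mathfrak a$ I would clear it by subtracting Koszul syzygies $f_je_{g+1}-f_{g+1}e_j$ and reduce the remaining entries modulo $\mathfrak a$. Carrying out this degree bookkeeping cleanly --- in particular controlling the entry $f_{g+1}$ forced into $I_1(\phi)$ by the Koszul relations and confirming that no entry outside $\mathfrak a$ drops below $\indeg((\mathfrak a:f_{g+1})/\mathfrak a)$ --- is the delicate heart of the argument and the place where the maximal-degree hypothesis does its work.
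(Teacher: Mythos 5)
Your liaison computation of $\omega_{S/I}$ and the resulting regularity formula are correct and are in substance the paper's own proof: where the paper writes ``by linkage'' and ``since $\a$ is unmixed, $\a:I=\a:f_{g+1}$,'' you have supplied exactly the missing details (maximal Cohen--Macaulayness of $S/I$ over the complete intersection $S/\a$ plus vanishing of higher $\Ext$, and localization at the minimal primes of $\a$, which suffices since $\a$ has no embedded primes), and your bookkeeping $\reg M=\dim M-\indeg\omega_M$ with the twist $-r-1+\sum\deg f_i$ matches the paper's. So the first two displayed formulas are fully proved.

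The third formula, however, you do not prove: you establish only $\indeg (I_{1}(\phi)/\a)\leq \indeg((\a:f_{g+1})/\a)$ and explicitly defer the reverse inequality, so as it stands the proposal is incomplete --- and your instinct about where the trouble sits is right in a stronger sense than you suggest: the reverse inequality is not merely delicate, it can fail, for exactly the reason you flag. Since every syzygy is an $S$-combination of the columns of $\phi$, the Koszul syzygy $f_{g+1}e_{1}-f_{1}e_{g+1}$ forces $f_{g+1}\in I_{1}(\phi)$, so $\indeg(I_{1}(\phi)/\a)\leq \deg f_{g+1}$ whenever $f_{g+1}\notin\a$; but $\indeg((\a:f_{g+1})/\a)=\reg S/\a-\reg S/I$ can exceed $\deg f_{g+1}$. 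Concretely, take $f_{1},f_{2},f_{3}$ three general cubics through a single point $p\in\P^{2}_{K}$: then $(f_1,f_2)_{p}=\mathfrak m_{p}$, so $I=I_{p}$ is the unmixed part of $J$, $\reg S/I=0$, and $\indeg((\a:f_{3})/\a)=\reg S/\a=4$; yet the degree-$6$ syzygy space is exactly the span of the three Koszul syzygies, such as $(f_{3},0,-f_{1})$, so every minimal $\phi$ has an entry congruent to a nonzero multiple of $f_{3}$ modulo $\a$, giving $\indeg(I_{1}(\phi)/\a)=3\neq 4$. The paper's one-line argument has the same blind spot: it handles columns with zero last entry (entries in $\a$) and columns whose nonzero last entry lies outside $\a$, but is silent about columns whose last entry is nonzero yet lies in $\a$ --- precisely the Koszul columns above. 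What your Koszul-subtraction device actually proves is $\indeg(I_{1}(\phi)/\a)=\min\{\deg f_{g+1},\,\indeg((\a:f_{g+1})/\a)\}$ (for $f_{g+1}\notin\a$), which is all that is needed for the intended application in Corollary~\ref{derivatives}, where $\deg f_{g+1}=d-1>d-2$; but the equality in the Proposition's third display cannot be established in the stated generality, so you should either prove the corrected statement or add the hypothesis $\indeg((\a:f_{g+1})/\a)\leq\deg f_{g+1}$.
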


\begin{proof} We have
$$\reg S/I = -\indeg\ \omega_{S/I} + \dim S/I \, $$
since $S/I$ is Cohen-Macaulay, and by linkage
$$
\omega_{S/I} = ((\a:I)/\a)(-r-1+\sum_{i=1}^{g}\deg f_{i}) \, .
$$
Thus, using the fact that $\reg S/\a = -g+\sum_{i=1}^{g}\deg f_{i}$, we obtain
$$
\reg S/I = \reg S/\a -\indeg ((\a:I)/\a) \, .
$$
Since $\a$ is unmixed,
$$
\a:I = \a:f_{g+1} \, ,
$$
proving the first two assertions.

If $f_{g+1}$ has maximal degree among the $f_{i}$, then
the non-zero elements in the last row of the syzygy matrix $\phi$, which are the generators
of $\a:f_{g+1}$, have the lowest degree among the non-zero entries of
their respective column. The columns with zero last entry, on the other hand, are
relations on the regular sequence $f_{1}, \dots, f_{g}$ and hence have entries in $\a$. It
follows that $$\indeg ((\a:f_{g+1})/\a) = \indeg(I_1(\phi)/\a) \, .$$
\end{proof}

We are now ready to prove Corollary \ref{derivatives}.

\begin{proof} Our assumption on the characteristic implies that $F$ is contained in the
ideal of $S$ generated by $\frac{\partial F}{\partial x_0}, \frac{\partial F}{\partial x_1}, \frac{\partial F}{\partial x_2}$. If $X$ is smooth, this ideal has codimension
at least 3 and $X$ is irreducible. Otherwise we apply Proposition \ref{reg from fitting} with $r=2$, $I=\C'_X$, and
$J=(\frac{\partial F}{\partial x_0}, \frac{\partial F}{\partial x_1}, \frac{\partial F}{\partial x_2})$.
One has $J \subset I$, and equality holds locally at every prime ideal of codimension
two because all singularities of $X$ are ordinary nodes. Since $I$ is unmixed,
we conclude that $I$ is the unmixed part of $J$. After a linear change of variables we may
assume that $\frac{\partial F}{\partial x_0}, \frac{\partial F}{\partial x_1}$ is a regular sequence of
two forms of degree $d-1$. Thus the Proposition
shows that
$$
\reg S/I = 2(d-2) -\indeg (I_{1}(\phi)/\a) \, .
$$
Finally, according to Corollary \ref{plane curves} one has $\reg S/I \leq d-2$, and equality
holds if and only if $X$ is reducible.
\end{proof}


\section{The Conductor of a Reducible Hypersurface}

The estimates obtained in Section 3
point to a difference between the reducible and the irreducible case. In the
present section we study this phenomenon by relating the conductor ideal
of a reducible subscheme to the conductor ideals of its irreducible
components.
This is done
in the next proposition. Our proof is an adaptation of arguments in Kunz [2005],
17.6, 17.11, 17.12, where the case of local rings of plane curve
singularities is treated.

\smallskip

\begin{proposition}\label{reduced} If
$X\subset \P^{r}_{K}$ is a reduced hypersurface with distinct irreducible components
$X_i$, then
$$
\C'_{X} = \sum_{i}\C'_{X_{i}}\prod_{j\neq i} I_{X_{j}} \, .
$$
\end{proposition}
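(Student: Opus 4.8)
The plan is to reduce everything to a component-by-component computation in the total quotient ring. Write $I_X=(F)$ with $F=F_1\cdots F_e$ a product of distinct irreducible forms, set $A_i=S/I_{X_i}=S/(F_i)$ and let $B_i=\overline{A_i}$. Since $A$ is reduced, its normalization is $\overline A=\prod_i B_i$, sitting inside the total quotient ring $Q=\prod_i\operatorname{Frac}(A_i)$, and $A\hookrightarrow\prod_i A_i\subset\overline A$ because $(F)=\bigcap_i(F_i)$. I will use the orthogonal idempotents $\varepsilon_i\in\overline A$ (the unit of the $i$-th factor, integral over $A$ since $\varepsilon_i^2=\varepsilon_i$), and I record two elementary facts: first, $\prod_{j\neq i}I_{X_j}=(u_i)$ is principal with $u_i:=\prod_{j\neq i}F_j$; second, $I_{X_i}=(F_i)\subseteq\C'_{X_i}$ because $F_i$ annihilates the $S$-module $B_i/A_i$. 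In particular $(F)\subseteq\sum_i\C'_{X_i}\prod_{j\neq i}I_{X_j}$, which will let me work modulo $F$ freely.

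For the inclusion $\supseteq$ I take a typical generator $g=h\,u_i$ with $h\in\C'_{X_i}$ and check $g\,\overline A\subseteq A$. Given $\beta=(\beta_k)\in\prod_k B_k$, the image of $g$ in $\operatorname{Frac}(A_k)$ vanishes for $k\neq i$ (as $F_k\mid u_i$), so $g\beta$ is supported in the $i$-th slot with value $u_i\,(h\beta_i)$, where $h\beta_i\in A_i$ since $h\in\C'_{X_i}$. Lifting $h\beta_i$ to $\tilde a\in S$, the element $u_i\tilde a\in S$ maps to $g\beta$ (it is divisible by every $F_k$, $k\neq i$, and reduces correctly modulo $F_i$), so $g\beta\in A$. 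Summing over $i$ gives $\supseteq$.

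For $\subseteq$, let $g\in\C'_X$, so $g\,\overline A\subseteq A$. Applying this to $\varepsilon_i$ gives $g\varepsilon_i\in A$; choose a lift $a^{(i)}\in S$. Since the $k$-th component of $g\varepsilon_i$ vanishes for $k\neq i$, each $F_k$ ($k\neq i$) divides $a^{(i)}$, hence $a^{(i)}=u_i\,b_i$ for some $b_i\in S$. Because $\sum_i\varepsilon_i=1$ in $Q$, we have $g\equiv\sum_i a^{(i)}\pmod F$, and since $(F)$ already lies in the right-hand side, it suffices to prove $b_i\in\C'_{X_i}$ for each $i$; then $a^{(i)}=u_i b_i\in\C'_{X_i}\prod_{j\neq i}I_{X_j}$ and we are done. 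To get $b_i\in\C'_{X_i}$ I must show $b_i\beta\in A_i$ for every $\beta\in B_i$, and this is where the whole force of $g\,\overline A\subseteq A$ enters: I apply it to $\beta\varepsilon_i\in\overline A$. The element $g\beta\varepsilon_i$ is again supported in the $i$-th slot, so its lift $c\in S$ is divisible by $u_i$, say $c=u_i d$ with $d\in A_i$; comparing $i$-th components in $B_i$ gives $u_i d=u_i\,(b_i\beta)$, using that the image of $g$ in $A_i$ equals $u_i b_i$ (read off from $a^{(i)}=u_i b_i$). Cancelling the nonzerodivisor $u_i$ in the domain $B_i$ yields $b_i\beta=d\in A_i$, as needed.

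The main obstacle is precisely this last cancellation step. A naive argument only shows that the image of $g$ in each $A_i$ lies in the component conductor $\C_{X_i}$, i.e.\ $g\in\bigcap_i\C'_{X_i}$, which is strictly weaker than membership in $\sum_i\C'_{X_i}\prod_{j\neq i}I_{X_j}$; one cannot in general divide out the factor $u_i$ at the level of the singular (non-domain) ring $A$ or modulo the conductor, since $u_i$ may well be a zerodivisor there. The resolution is to carry out the division inside the \emph{domain} $B_i$, where $u_i\neq0$ is automatically a nonzerodivisor, which is exactly what the idempotent $\varepsilon_i$ makes available. (Alternatively, one can package the two-component case through Lemma~\ref{extension} and induct on $e$, but the idempotent computation handles all components uniformly and is what I would write.)
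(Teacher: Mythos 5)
Your proof is correct. It works with the same objects as the paper's --- the partial normalization $B=\times_i A_i$, its idempotents, and the cofactors $u_i=G_i=\prod_{j\neq i}F_j$ --- but it is organized differently: the paper computes $\C_{B/A}=B(\sum_i G_ie_i)$ and $\C_{C/B}=\sum_i \C'_{X_i}e_i$ and then concludes in one line from Lemma~\ref{extension}, the multiplicativity of conductors when the inner conductor is principal and generated by a non zerodivisor, whereas you verify the two inclusions directly at the element level and never invoke that lemma. In effect you have inlined it: your cancellation of $u_i$ inside the domain $B_i$ is exactly the mechanism of the paper's proof of Lemma~\ref{extension} (there one writes $\C_{C/A}=\C u$ and uses that $u$ is a non zerodivisor of $C$), and your observation that $g\varepsilon_i\in A$ forces $u_i\mid a^{(i)}$ is precisely the paper's computation of $\C_{B/A}$. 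What the paper's packaging buys is a reusable lemma (it is also the key to the proof of Theorem~\ref{quasi-main}) and a one-line finish; what yours buys is a self-contained elementary argument that makes explicit exactly where division would fail in the non-domain ring $A$ and why it succeeds in $B_i$ --- your diagnosis of that obstacle is on target. Two small points: in the final step $d$ should be taken as a lift in $S$ and only then reduced to $A_i$ (as written you treat $d\in A_i$ before reducing), though the intent is clear; and your closing aside slightly misdescribes the alternative route --- no induction on the number of components is needed, since Lemma~\ref{extension} applies in a single stroke to the tower $A\subset \times_i A_i\subset \overline A$ with all idempotents at once, which is exactly what the paper does.
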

\begin{proof}
We write $F$, $F_i$ for the defining homogeneous polynomials
of $X$, $X_i$ in the polynomial ring $S: = K[x_0, \ldots, x_r]$,
$G_{i} := \prod_{j\neq i} F_{j}$, and $A$, $A_i$
for the homogeneous coordinate rings $S/(F)$, $S/(F_i)$. Consider the ring
extensions
$$
A \, \subset \, B:= {\times}_i A_i \, \subset \, C:= \overline A = \overline B = {\times}_i \overline{A_i} \, .
$$
Further, write $e_i$ for the $i$th idempotent of $B$ and notice that $A_ie_i = Ae_i =Se_i$,
$B = \sum_i Se_i$, $1_A= \sum_i e_i$.

Clearly $\C_{C/B} = {\times}_i \C_{\overline{A_i} / A_i} = \sum _i \C'_{X_i}e_i$.

Next, we claim that $\C_{B/A} =B( {\sum}_i G_ie_i)$. Indeed, the image of an element
$H \in S$ in $A$ belongs to $\C_{B/A}$ if and only if $SHe_i \subset A$ for every
$i$, which means that for every $i$ there exists $H_i \in S$ so that
$H_i \equiv H \, {\rm{mod}} \, F_i$ and  $H_i \equiv 0 \, {\rm{mod}} \, G_i$. This is equivalent
to $He_i \in SG_i e_i$. Hence indeed $\C_{B/A} ={\sum}_i SG_ie_i =B( {\sum}_i G_ie_i)$, where the last equality holds because the $e_i$ are orthogonal idempotents.

Since the $B$-ideal $\C_{B/A}$ is generated by a single non zerodivisor of $C$, it follows that
$ \C_{C/A} = \C_{C/B} \, \C_{B/A}$ according to Lemma \ref{extension}. Therefore
\begin{align*}
\C'_X  \, 1_A = \C_{C/A} &= ( {\sum}_i G_ie_i)(\sum _i \C'_{X_i}e_i) \\
&= \sum_i G_i\C'_{X_i} e_i \\
&=(\sum_i G_i\C'_{X_i}) (\sum_i e_i) \\ &= (\sum_i G_i\C'_{X_i}) \,  1_A \, .
\end{align*}
We deduce that $\C'_X= \sum_i G_i \C'_{X_i}$ because the ideals on both sides
of the equation contain $F$.
\end{proof}

\smallskip

\begin{corollary}\label{sequence}
In the setting of Proposition $\ref{reduced}$ we write $S:=K[x_0, \ldots, x_r]$ and
$d:={\rm deg} \, X$, $d_i:={\rm deg} \, X_i$. For every $i$ there is an exact sequence
$$
0 \rightarrow S(-d) \longrightarrow \C'_{\cup_{j\neq i}X_j} (-d_i) \oplus \C'_{X_i}(-d+d_i) \longrightarrow \C'_X \rightarrow 0 \, .
$$
In particular,
$$
\reg  \C'_X \leq {\rm max}_i \{ d-1, \reg \C'_{X_i}+d-d_i \} .
$$
\end{corollary}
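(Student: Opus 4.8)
The plan is to construct the exact sequence by hand out of the description of the conductor in Proposition~\ref{reduced}, and then to read off the regularity bound from the standard behaviour of regularity in a short exact sequence, finishing with an induction on the number of components. Throughout I keep the notation $F_i$ for the defining form of $X_i$ (of degree $d_i$) and $G_i:=\prod_{j\neq i}F_j$ (of degree $d-d_i$), which is the defining form of $Y_i:=\bigcup_{j\neq i}X_j$. Since $X$ is reduced, $Y_i$ is again a reduced hypersurface whose distinct irreducible components are the $X_j$ with $j\neq i$, so Proposition~\ref{reduced} applies to $Y_i$ as well. Multiplication by $F_i$ carries $\C'_{Y_i}$ into $\C'_X$ and multiplication by $G_i$ carries $\C'_{X_i}$ into $\C'_X$, so together they give a degree-preserving map
$$
\C'_{Y_i}(-d_i)\oplus \C'_{X_i}(-d+d_i)\ \xrightarrow{\ (a,b)\mapsto F_ia+G_ib\ }\ \C'_X ,
$$
while $1\mapsto(G_i,-F_i)$ defines a map $S(-d)\to \C'_{Y_i}(-d_i)\oplus\C'_{X_i}(-d+d_i)$. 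This second map is well defined because the defining form of a hypersurface lies in its conductor, so $G_i\in\C'_{Y_i}$ and $F_i\in\C'_{X_i}$, and both entries land in degree $d$; the composite of the two maps is zero since $F_iG_i-G_iF_i=0$.

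For surjectivity I would combine Proposition~\ref{reduced} applied to $X$ and to $Y_i$. The Proposition gives $\C'_X=\sum_k G_k\,\C'_{X_k}$ and $\C'_{Y_i}=\sum_{k\neq i}\bigl(\prod_{j\neq i,k}F_j\bigr)\C'_{X_k}$; multiplying the latter by $F_i$ reconstitutes $G_k=\prod_{j\neq k}F_j$ in each summand, so $F_i\C'_{Y_i}=\sum_{k\neq i}G_k\C'_{X_k}$. Adding the $k=i$ term $G_i\C'_{X_i}$ yields $\C'_X=F_i\C'_{Y_i}+G_i\C'_{X_i}$, which is exactly surjectivity of the right-hand map.

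The step I expect to be the crux is the identification of the kernel with $S(-d)$. If $(a,b)$ lies in the middle module with $F_ia+G_ib=0$, then since $F_i$ and $G_i$ are coprime in the UFD $S$ (the $F_j$ being distinct irreducibles), we get $a=G_ia'$ and $b=-F_ia'$ for a unique $a'\in S$; conversely every pair $(G_ia',-F_ia')$ already lies in $\C'_{Y_i}\oplus\C'_{X_i}$ precisely because $G_i\in\C'_{Y_i}$ and $F_i\in\C'_{X_i}$. Hence the kernel is the free rank-one module generated by $(G_i,-F_i)$ in degree $d$, that is $S(-d)$, and the sequence is exact. The delicate point here is that both ingredients are needed: coprimality pins down the kernel inside $S\oplus S$, and the two conductor containments guarantee that this generator actually sits in the middle module.

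Finally, for the regularity bound I would apply the standard inequality
$$
\reg\C'_X\le\max\{\reg S(-d)-1,\ \reg\bigl(\C'_{Y_i}(-d_i)\oplus\C'_{X_i}(-d+d_i)\bigr)\}=\max\{d-1,\ \reg\C'_{Y_i}+d_i,\ \reg\C'_{X_i}+d-d_i\}.
$$
To turn the term $\reg\C'_{Y_i}+d_i$ into a bound involving only single components I would induct on the number of components, the one-component case being trivial since then $d=d_i$. The inductive hypothesis applied to $Y_i$, which has degree $d-d_i$ and one fewer component, gives $\reg\C'_{Y_i}\le\max_{j\neq i}\{(d-d_i)-1,\ \reg\C'_{X_j}+(d-d_i)-d_j\}$; adding $d_i$ folds this into $\max_{j\neq i}\{d-1,\ \reg\C'_{X_j}+d-d_j\}$, and combining with the remaining two terms yields the asserted $\reg\C'_X\le\max_i\{d-1,\ \reg\C'_{X_i}+d-d_i\}$.
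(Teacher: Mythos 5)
Your proof is correct and takes essentially the same route as the paper's: the same addition map $(a,b)\mapsto F_ia+G_ib$ with kernel $S\cdot(G_i,-F_i)\cong S(-d)$, identified via coprimality of $F_i$ and $G_i$ together with the containments $G_i\in\C'_{\cup_{j\neq i}X_j}$ and $F_i\in\C'_{X_i}$, followed by the standard regularity estimate and induction on the number of components. The only difference is expository: you spell out the application of Proposition~\ref{reduced} to $\cup_{j\neq i}X_j$ to obtain surjectivity, a step the paper leaves implicit.
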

\begin{proof} We write $F=F_{i}G_{i}$ as in the previous proof. We will construct the
desired sequence in the more precise form
$$
0 \rightarrow SF \longrightarrow F_i \C'_{\cup_{j\neq i}X_j}  \oplus  G_i \C'_{X_i} \longrightarrow \C'_X \rightarrow 0 \,.
$$
Proposition~\ref{reduced} gives $F_i \C'_{\cup_{j\neq i}X_j} + G_i \C'_{X_i} =\C'_X$,
so we may take the right hand map in the desired sequence to be addition,
and it suffices to show that
$$
F_i \C'_{\cup_{j\neq i}X_j} \cap G_i \C'_{X_i} =SF.
$$
The right hand side
of this equation is  contained in the left hand side because
$\C'_{\cup_{j\neq i}X_j}\ni G_{i}$
and $\C'_{X_i}\ni F_{i}$.
The reverse inequality holds because $F_{i}$ and $G_{i}$ are relatively prime,
and thus
$$
F_i \C'_{\cup_{j\neq i}X_j} \cap G_i \C'_{X_i}\subset
F_i S \cap G_i S = F_{i}G_{i}S = FS \, .
$$

The assertion about the regularity of $\C'_X$ follows from the exact sequence, using induction on the number of irreducible components of $X$.
\end{proof}

\smallskip

\begin{corollary}\label{curves} Let
$X\subset \P^{2}_{K}$ be a reduced plane curve over an algebraically closed field with
distinct irreducible components
$X_i$ and write ${\mathcal J}_{X_i}$ for the saturated ideals defining the
reduced singular sets $({\rm{Sing}} \, X_i)_{\rm red} \subset \P^2_K$.
If $X$ has only ordinary nodes and cusps as singularities, then the ideal
$$
\sum_{i}{\mathcal J}_{X_{i}}\prod_{j\neq i} I_{X_{j}}
$$
is the saturated ideal defining the reduced singular set ${(\rm{Sing}} \, X)_{\rm red} \subset \P^2_K$.
\end{corollary}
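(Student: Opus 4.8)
The plan is to reduce everything to Proposition~\ref{reduced}, whose conclusion $\C'_X = \sum_i \C'_{X_i}\prod_{j\neq i} I_{X_j}$ already has the shape of the asserted formula; what remains is to identify $\C'_{X_i}$ with ${\mathcal J}_{X_i}$ and $\C'_X$ with the saturated ideal ${\mathcal J}_X$ of $({\rm Sing}\,X)_{\red}$. Both identifications are special cases of one fact: \emph{for a reduced plane curve over an algebraically closed field whose only singularities are ordinary nodes and cusps, the conductor ideal is the reduced ideal of the singular set.} This is precisely the remark following Theorem~\ref{main}: at a node $xy=0$ and at a cusp $y^{2}-x^{3}=0$ the local conductor is the maximal ideal, so $\C'$ is generically radical along its support, which is the finite set of singular points; and since $\C'$ is perfect by Theorem~\ref{main}, it has no embedded components and hence coincides with the radical ideal of that point set.

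First I would check that the hypothesis on $X$ descends to each component $X_i$, i.e. that $X_i$ too has only ordinary nodes and cusps. This is a local branch-counting argument. A cusp of $X$ is analytically irreducible, so it lies on a single component, where it is again a cusp. A node of $X$ consists of two smooth transverse branches: either both lie on one $X_i$, giving a node of $X_i$ through which no other component passes, or they lie on two distinct components $X_i,X_j$, which are then smooth at that point and cross transversally. In particular the node/cusp hypothesis forces that no three components meet at a point and that a self-intersection of some $X_i$ never lies on another component. Consequently every singular point of $X_i$ is a node or a cusp of $X_i$, so the displayed fact applies to each $X_i$ and yields $\C'_{X_i}={\mathcal J}_{X_i}$, with the understanding that a smooth component contributes the unit ideal on both sides. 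Applying the same fact to $X$ gives $\C'_X={\mathcal J}_X$.

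It then remains only to substitute these identities into Proposition~\ref{reduced}:
$$
{\mathcal J}_X = \C'_X = \sum_i \C'_{X_i}\prod_{j\neq i} I_{X_j} = \sum_i {\mathcal J}_{X_i}\prod_{j\neq i} I_{X_j},
$$
which is exactly the claim. The only substantive step is the local analysis of the second paragraph guaranteeing that the node/cusp hypothesis passes to the components; the rest is packaged in Proposition~\ref{reduced} and in the perfectness assertion of Theorem~\ref{main}. I expect the main obstacle to be making the branch-counting clean---in particular, carefully excluding the configurations (three concurrent branches, or a self-node of $X_i$ lying on a third component) that the hypothesis rules out but that would otherwise destroy the term-by-term matching of the two sums.
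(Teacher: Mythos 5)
Your proposal is correct and follows essentially the same route as the paper, whose entire proof is that the node/cusp hypothesis makes $\C'_X$ and the $\C'_{X_i}$ reduced (via the perfectness from Theorem~\ref{main} and the generically-radical observation) and then Proposition~\ref{reduced} applies. Your branch-counting verification that each $X_i$ again has only nodes and cusps, and the remark about smooth components contributing the unit ideal, merely make explicit details the paper leaves implicit.
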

\begin{proof}
Because of the assumption on the singularities, the conductor ideals $\C'_X$
and $\C'_{X_i}$ are reduced. Now apply Proposition \ref{reduced}.
\end{proof}


\section{Examples of Resolutions of Singular Sets}

We exhibit two situations in which we can specify the resolution of the singular ideal of a plane curve completely. The third example
shows that the bound coming from B\'ezout's Theorem can be much less sharp than that of Corollary~\ref{plane curves}. In this section $X \subset \P^2_K$ will always be a reduced curve over
an algebraically closed field $K$.

\begin{example}
 Suppose that $X\subset \P^{2}_{K}$, given by $F=0$, is the union of more than 1 distinct smooth components $X_{i}$ meeting transversely; in other words, $X$ has smooth irreducible components
 and only ordinary nodes as singularities. Suppose that the equation of $X_{i}$ is $F_{i}=0$, so that $F = \prod_{i=1}^{\ell}F_{i}$. According to Corollary \ref{curves} for instance, the reduced singular set of $X$  has homogeneous ideal generated by the $\ell$ products
$$
G_{i}: = \prod_{j\neq i} F_{j} \, .
$$
It follows, in particular,
that the ideal $I$ generated by the $G_{i}$ has codimension 2.

Hence this ideal is the
flat specialization of the perfect ideal
$$
(\{ g_{i}:=\prod_{j\neq i} x_{j} \ | \ 1 \leq i \leq \ell\}) \subset K[x_{1}, \dots, x_{\ell}] \, .
$$
Since all the products $x_{i}g_{i}$ are equal to $\prod_{j=1}^{\ell}x_{j}$, we have $\ell-1$
syzygies of the form
$x_{i+1}g_{i+1} - x_{i}g_{i}$ on the $g_{i}$, and these generate all the syzygies.
Thus $I$ is also perfect and its
homogeneous minimal free resolution over the polynomial ring $S$ in 3 variables has the form
$$
0\to \oplus_{i=1}^{\ell-1}S(-d) \longrightarrow \oplus_{i=1}^{\ell}S(-d+d_{i}) \longrightarrow	 I \to  0 \, ,
$$
where $d=\deg X$ and $d_{i} = \deg X_{i}$. In particular, we see directly that
$\reg I = d-1$ as claimed.
\end{example}

\smallskip

\begin{example}
 Let $X\subset \P^2_K$ be a rational curve of degree $d \geq 2$ with only ordinary nodes and cusps as singularities, so that $({\rm{Sing}}\, X)_{\rm{red}}$ consists of ${d-1\choose 2}$ distinct points. We claim that the minimal free resolution of the saturated ideal $I:=I_{({\rm{Sing}}\, X)_{\rm{red}}}$ has the form
$$
0\to S(-d+1)^{d-2} \longrightarrow S(-d+2)^{d-1} \longrightarrow	I\to 0 \, ,
$$
and thus $\reg I= d-2$. As $\reg \C'_X \geq \reg I$ and $X$ is irreducible, it also follows that
$\reg \C'_X=d-2$ and $h^{0}(\CO_{\overline X}(1))>3$, according to Corollary \ref{plane curves}.

To see the claim about the resolution, we first note that no curve of degree $d-3$ can pass through all the nodes of
$X$. This is because such a curve would meet $X$ in a scheme of degree
at least $2{d-1\choose 2} = (d-1)(d-2)$, whereas by B\'ezout's Theorem the intersection
scheme could only be of length $d(d-3) < (d-1)(d-2)$.

Because $I$ is the ideal of a reduced set of points, any linear form $x$
that  vanishes at none of the points is  a non zerodivisor modulo $I$.
If we reduce $I$ modulo $x$ we get a homogeneous ideal of finite colength ${d-1\choose 2}$,
not containing any form of degree $d-3$,
in the polynomial ring in 2 variables. The only such ideal is the $(d-2)$-nd power of the
maximal homogeneous ideal, and this has minimal free resolution as above. Since reducing modulo a linear non zerodivisor preserves the
shape of the resolution, we are done.
\end{example}

\smallskip

\begin{example}
In some cases, the regularity bound given in
Corollary~\ref{plane curves} can be deduced simply from
B\'ezout's Theorem. For example, suppose that $X \subset \P^2_K$ is an irreducible curve of degree $d$ and the singular set of $X$ is the transverse complete intersection of curves $E$ and $F$ of degrees $e < f$, respectively. In this case Corollary \ref{plane curves} asserts in particular that
$\reg (E\cap F) = e+f-1 \leq d-2$, that is, $e+f < d$.
By B\'ezout's Theorem, the degree of $E\cap X$ is $de$. But $E$ meets $X$ with multiplicity
at least 2 at each of the $ef$ points of $E\cap F$, so $2ef\leq de$, or $e+f < 2f\leq d$ as claimed.

On the other hand, suppose $I \subset K[x_0,x_1,x_2]$ is the ideal generated
by the $m\times m$ minors of
a generic $m+1\times m$ matrix $M$ whose first column consists of generic forms of degree $2m-1$
and whose other entries are generically chosen quadratic forms.
From the formulas in Eisenbud \cite{Eisenbud}, Chapter 3 we see that
\begin{itemize}
\item $I$ is generated by $m+1$ forms of degree $4m-3$;
\item $I$ is the ideal of a set $\Delta$ consisting of
$$
\delta = 20{m-1\choose 2}+18m - 17
$$
reduced points;
\item $\reg I = 6m-5$.
\end{itemize}
If $X$ is an irreducible curve singular at all the points of $\Delta$, then
we can find a linear combination of the $m+1$ generators of $I$ defining a curve
that meets $X$ in a scheme of length at least $m+ 2\delta$, so B\'ezout's Theorem shows that the degree $d$
of any such curve $X$ satisfies
$$
d\geq \frac{m+2\delta}{4m-3} \, ,
$$
which, after substituting the value of $\delta$, becomes $d\geq 5m-2$.
However, Corollary~\ref{plane curves} shows that in fact we must have
$d\geq \reg I+2 = 6m-3$. (In experiments, the minimal degree in these
circumstances---given that the matrix $M$ is chosen generally---for
the first values of $m\geq 2$ seems actually to be equal to $8m-6$.)
\end{example}

\section{Other results on the situation of the nodes}\label{other results}
Coolidge~\cite{Coolidge} says that ``One of the most important unsolved probems in the whole theory of plane curves [is] the situation of the permissible singular points.''
But we know of very few results shedding light on this problem. In fact, other than the results of Kloosterman and of this paper the only references of which we are aware are:
\begin{itemize}

\item On pp 389 ff Coolidge gives some results for rational curves of degrees 6 and 7 in $\P^{2}_K$.

\item A result of Arbarello and Cornalba~\cite{Arbarello} shows that vanishing doubly at $
\delta$ nodes imposes independent conditions on forms of degree $d$ whenever
$
{d+2\choose 2} \geq 3\delta.
$
\end{itemize}

\bigskip

\bigskip

\vbox{\noindent Author Addresses:\par
\smallskip
\noindent{David Eisenbud}\par
\noindent{Department of Mathematics, University of California, Berkeley,
Berkeley, CA 94720}\par
\noindent{eisenbud@math.berkeley.edu}\par
\smallskip
\noindent{Bernd Ulrich}\par
\noindent{Department of Mathematics, Purdue University, West Lafayette, IN 47907}\par
\noindent{ulrich@math.purdue.edu}\par
}

\end{document}